\documentclass[11 pt]{amsart}
\usepackage{graphicx}

\usepackage{amsmath}
\usepackage{mathtools}
\usepackage
{amstext, amscd, setspace, tikz-cd,mathtools, enumerate, graphics, latexsym, siunitx}

\usepackage{pst-node}
\usepackage{tikz-cd}

\usepackage{amssymb}
\usepackage{hyperref}
\hypersetup{colorlinks=true,
    linkcolor=blue,
    filecolor=magenta,      
    urlcolor=cyan,}

\usepackage{PTSerif}
\usepackage{graphicx}

\usepackage[cmtip,all]{xy}
\newcommand{\properideal}{%
\mathrel{\ooalign{$\lneq$\cr\raise.22ex\hbox{$\lhd$}\cr}}}

\newcommand{\q}{\mathfrak{q}}
\newcommand{\p}{\mathfrak{p}}
\newcommand{\D}{\mathcal{D}}
\newcommand{\m}{\mathfrak{m}}

\newcommand{\Z}{\mathbb{Z}}

\newcommand{\Ass}{\operatorname{Ass}}

\newcommand{\Supp}{\operatorname{Supp}}

\newcommand{\Hom}{\operatorname{Hom}}

\newcommand{\Ext}{\operatorname{Ext}}

\theoremstyle{plain}
\newtheorem{theorem}{Theorem}[section]
\newtheorem{lemma}[theorem]{Lemma}

\newtheorem{corollary}[theorem]{Corollary}
\newtheorem{proposition}[theorem]{Proposition}
\newtheorem{remark}[theorem]{Remark}
\newtheorem{definition}[theorem]{Definition}

\newtheorem{hypothesis}[theorem]{Hypothesis}
\newtheorem{setup}[theorem]{Setup}
\newtheorem{point}[theorem]{}

\title[On Bass numbers of graded components of local cohomology modules]{On Bass numbers of graded components of local cohomology modules supported on $\mathfrak{C}$-monomial ideals in mixed characteristic}

\author{\textsc{Sayed Sadiqul Islam}}
\address{Department of Mathematics, IIT Bombay, Powai, Mumbai 400076, India}
\email{ssislam1997@gmail.com, 22d0786@iitb.ac.in}
\author{\textsc{Tony J. Puthenpurakal}}
\address{Department of Mathematics, IIT Bombay, Powai, Mumbai 400076, India}
\email{tputhen@math.iitb.ac.in}

\subjclass[2020]{Primary 13D45; Secondary 13C11}
\date{\today}
\keywords{Multigraded local cohomology, Monomial ideals, Straight modules}

\begin{document}

\begin{abstract}
 Let $A$ be a Dedekind domain of characteristic zero such that for each height one prime ideal $\p$ in $A$, the local ring $A_{\p}$ has mixed characteristic with finite residue field. Suppose that $R=A[X_1,\ldots,X_n]$ is a standard $\mathbb{N}^n$-graded polynomial ring over $A$, i.e., $\deg A=\underline{0}\in \mathbb{N}^n$ and $\deg(X_j)=e_j\in \mathbb{N}^n$. Let $I$ be a $\mathfrak{C}$-monomial ideal of $R$ and let $M:= H^i_I(R)=\bigoplus_{\underline{u}\in \mathbb{Z}^n}M_{\underline{u}}$. Recently, the second author and S. Roy  [2025, J. Algebra 681, 1–21] proved that for a fixed $\underline{u}\in\mathbb{Z}^n$, the Bass numbers $\mu_i(\p,M_{\underline{u}})$ are finite for each prime ideal $\p$ in $A$ and for every $i\geq 0$. Let  for a subset of $U$ of $\mathcal{S}=\{1, \ldots, n\}$, define a block to be the set
$\displaystyle\mathcal{B}(U)=\{\underline{u} \in \Z^n \mid u_i \geq 0 \mbox{ if } i \in U \mbox{ and } u_i \leq -1 \mbox{ if } i \notin U \}$. Note that $\bigcup_{U\subseteq \mathcal{S}}\mathcal{B}(U)=\mathbb{Z}^n$. In this article, the main result we establish is that for a fixed prime ideal $\p$ in $A$ and $i\geq 0$, the set of Bass numbers $\{\mu_i(\p,M_{\underline{u}})\mid \underline{u}\in \mathbb{Z}^n\}$ is constant 
on $\mathcal{B}(U)$ for each subset $U$ of $\{1, \ldots, n\}$. Our idea is to prove this by carrying out a comprehensive study of the structure theorem for the graded components of $M$ when $A$ is a complete DVR of mixed characteristic with finite residue field.
\end{abstract}

\maketitle

\section{Introduction}
Introduced in the 1960s as a tool for studying sheaves and their cohomology in algebraic geometry, local cohomology has since found extensive applications in commutative algebra. Over the years, it has developed into a fundamental tool and a vibrant area of research in its own right. 

Local cohomology modules are not finitely generated in general. The lack of finite generation has turned attention of researchers to study other type of finiteness properties; see, for instance, \cite{HS-93, Lyu-93, Lyu-97, Lyu-2000}. Nevertheless, for the purposes of this article, we restrict our attention to the graded components of local cohomology modules. We now review some related literature and also state a result of the second author, joint with a coauthor, from \cite{TS-25}, which motivates the present work.

Let $S=\oplus_{n\geq 0}S_n$ be standard graded Noetherian ring and $S_+$ be irrelevant ideal. The theory of local cohomology  with respect to $S_+$ is particularly well behaved \cite[Theorem 15.1.5]{BS-13}. It is well-known that if $M$ is finitely generated graded $S$-module then for $i\geq 0$,
\begin{enumerate}
    \item $H^i_{S_+}(M)_n$ is finitely generated $S_0$-module for all $n\in \mathbb{Z}$.
    \item $H^i_{S_+}(M)_n=0$ for all $n\gg 0$.
\end{enumerate}
This fact motivated the second author to study the graded components of $H^i_I(R)$ where  $R$ is a standard graded polynomial ring $A[X_1,\ldots,X_n]$ over a regular ring  $A$  containing a field of characteristic zero and $I$ is an arbitrary homogeneous ideal of $R$ (see, \cite{TP-collect}).  In \cite{TS-24}, he, together with the coauthor, established some of these results under weaker assumptions on the base ring $A$. Later they studied the case when $A=B^G$, the ring of invariants of a regular domain $B$ that contains a field $K$ of characteristic zero and $G$ is a finite subgroup of the automorphism group of $B$ (see, \cite{TS-19}). 

Let $R=K[X_1,\ldots,X_n]$ be a standard $\mathbb{N}^n$-graded polynomial ring over a field $K$ and let $I$ be a monomial ideal of $R$. Then, $H^i_I(R)$ is endowed with natural $\mathbb{Z}^n$-graded structure. In addition, it has the structure of a $\D$-module. There has been lot research on understanding these structures on $H^i_I(R)$. In fact, $\D$-module approach and $\mathbb{Z}^n$-graded approach to the study of local cohomology modules are equivalent. We refer the reader to the beautiful survey article \cite{Mon-13} by Montaner for a more detailed discussion.
\begin{hypothesis}\label{hypo-TS}
Let $A$ be a Dedekind domain of characteristic zero such that for each height one prime ideal $\p$ in $A$, the local ring $A_{\p}$ has mixed characteristic with finite residue field. Suppose that $R=A[X_1,\ldots,X_n]$ is a standard $\mathbb{N}^n$-graded polynomial ring over $A$, i.e., $\deg A=\underline{0}\in \mathbb{N}^n$ and $\deg(X_j)=e_j\in \mathbb{N}^n$. Let $I$ be a $\mathfrak{C}$-monomial ideal of $R$ and let $M:= H^i_I(R)=\bigoplus_{\underline{u}\in \mathbb{Z}^n}M_{\underline{u}}$. Assume that $\mathfrak{Z}:= \Ass_A H^i_I(R)\setminus \{0\}$.
\end{hypothesis}
Recall that an ideal $I$ of $R$ is called $\mathfrak{C}$-monomial ideal if it can be generated by elements of the form $aU$ where $a\in A$ (not necessarily a unit) and $U$ is a monomial in the variables $X_1,\ldots,X_n$. We note that if $I$ is a $\mathfrak{C}$-monomial ideal of $R$, then the components of $H^i_I(R)$ may not be finitely generated as $A$-module; see \cite[Example 7.1]{TS-25}.

Recently, the second author together with the coauthor in \cite{TS-25} proved the following result concerning the structure theorem for the multigraded components of $M$.
\begin{theorem}\cite[Theorem 1.3]{TS-25}\label{structure theorem due to TS}
    Assume the hypothesis as in \ref{hypo-TS}. Take any prime ideal $\q$ of $A$. We set $T=\widehat A_{\q}$ and $N=M\otimes_A T=H^i_{IS}(S)$, where $S=T[X_1,\ldots,X_n]$. Let $K$ and $K_{\q}$ be the quotient fields of $A$ and $T$ respectively. For a fixed $\underline{u}\in \mathbb{Z}^n$, the following holds:
    \begin{enumerate}[\rm (1)]
        \item $t(M_{\underline{u}})=\bigoplus_{\p\in \mathfrak{Z}}\Gamma_{\p}(M_{\underline{u}})$, where $t(M_{\underline{u}})$ denote the torsion $A$-submodule of $M_{\underline{u}}$.
        \item For $\p\in\mathfrak{Z}$, there are finite numbers $l({\underline{u}}), \beta_j({\underline{u}}), t({\underline{u}})$ such that $$\Gamma_{\p}(M_{\underline{u}})=E_A\left(\frac{A}{\p}\right)^{l({\underline{u}})} \oplus\left(\bigoplus_{j=1}^{t({\underline{u}})}\frac{A}{\p^{\beta_j({\underline{u}})}A}\right).$$
        \item Let $\overline{N_{\underline{u}}}:= \frac{N_{\underline{u}}}{\Gamma_{\q}(N_{\underline{u}})}$. Then $\overline{N_{\underline{u}}}=T^{a(\underline{u})}\oplus K_{\q}^{b(\underline{u})}$ for some finite numbers $a(\underline{u}),b(\underline{u})\geq 0$.
        \item $M_{\underline{u}}\otimes_A K=K^{\alpha(\underline{u})}$ for some $\alpha(\underline{u})\geq 0$. Moreover, $\{\alpha(\underline{u})\mid \underline{u}\in \mathbb{Z}^n\}$ is a finite set.
    \end{enumerate}
\end{theorem}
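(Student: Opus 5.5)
The plan is to reduce everything to monomial ideals over $A$ and to the elementary structure of the local cohomology of $A$ with support in principal ideals. Then I would use the classification of modules over a Dedekind domain, and over a complete DVR, to read off (1)--(4). Write $I=(a_1U_1,\ldots,a_sU_s)$ with $a_k\in A$ and $U_k$ monomials. Since $\sqrt{(aU)}=\sqrt{(a)}\cap\sqrt{(U)}$, the Mayer--Vietoris sequence (inductively on the number of generators) gives spectral sequences. Their $E_1$ (or $E_2$) terms are built from modules $H^p_{(b)}\bigl(H^q_J(R)\bigr)$, where $b\in A$ is a product of some $a_k$ and $J$ is a squarefree monomial ideal. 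All maps are $\mathbb{Z}^n$-graded, so everything can be done degree by degree.

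\textbf{Step 1: the building blocks.} For a monomial ideal $J$, $H^q_J(R)=H^q_J(\mathbb{Z}[X])\otimes_{\mathbb{Z}}A$ by flat base change. By the Čech complex on monomials, each graded piece $H^q_J(\mathbb{Z}[X])_{\underline u}$ is a finitely generated free $\mathbb{Z}$-module, so $H^q_J(R)_{\underline u}\cong A^{r}$. Moreover $r$ depends only on the block $\mathcal{B}(U)$ containing $\underline u$, in particular it takes finitely many values. Applying $H^p_{(b)}$ with $b\neq 0$ yields pieces $A^r$ (for $p=0$, $b$ nonzero gives $0$ unless $b$ is a unit) and $(A_b/A)^r$ (for $p=1$). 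Now $A_b/A=\bigoplus_{\p\ni b}E_A(A/\p)$. Hence each $\underline u$-component of $M$ is obtained by finitely many kernels, cokernels and extensions from modules of the shape $A^r$, $A_b^r$ and $\bigoplus E_A(A/\p)^r$, with only finitely many primes (those containing some $a_k$) involved.

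\textbf{Step 2: torsion part, (1) and (2).} Over a Dedekind domain a torsion module splits as $\bigoplus_{\p}\Gamma_{\p}$, which gives (1) with $\mathfrak{Z}$ finite by Step 1. For (2), the class of $A$-modules whose $\p$-primary parts are artinian (i.e.\ have finite-dimensional socle over the finite field $A/\p$) is closed under subquotients and extensions. By Step 1 it contains all building blocks, so $\Gamma_{\p}(M_{\underline u})$ is an artinian $A_{\p}$-module. An artinian module over the DVR $A_{\p}$ is $E(A/\p)^{l}\oplus$ a finite direct sum of cyclic modules $A/\p^{\beta_j}$: its divisible part is injective and splits off, and the reduced complement has finite length. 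Here $l,t$ are bounded by $\dim_{A/\p}\Hom(A/\p,M_{\underline u})$.

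\textbf{Step 3: (3) and (4).} Tensoring with $K$ kills all torsion building blocks. Thus $M_{\underline u}\otimes K=H^i_{IK[X]}(K[X])_{\underline u}$, and $IK[X]$ is a monomial ideal over a field. Its components are finite dimensional and, by Step 1 or the straight-module theory, depend only on the block of $\underline u$; this gives (4) and the finiteness of $\{\alpha(\underline u)\}$. For (3), $N=M\otimes_A T$ by flatness, and $\overline{N_{\underline u}}$ is a torsion-free $T$-module of finite rank, namely $\alpha(\underline u)$. Over a complete DVR, a torsion-free module of finite rank splits as (divisible part) $\oplus$ (reduced part). The divisible part is $K_{\q}^{b}$, and the reduced torsion-free finite-rank part is free by Kaplansky's theorem for complete DVRs, giving $T^{a}$.

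\textbf{Expected obstacle.} The main difficulty is Step 1/2 bookkeeping: the spectral sequence differentials need not split, so one must carefully check that the artinian-$\p$-primary property and finiteness of rank pass through kernels, cokernels and extensions. One must also check that no non-finitely generated torsion-free junk such as $A_b$ survives in a way that breaks (3) before completion. I would handle this by working in the Serre subcategory of $A$-modules $L$ with $\operatorname{rank}L<\infty$ and $\Gamma_{\p}(L)$ artinian for all $\p$, verifying closure under extensions via the long exact $\Gamma_{\p}$ and $\Hom(A/\p,-)$ sequences.
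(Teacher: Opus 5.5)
Your argument is correct in outline and takes a different, heavier route than the one the paper's own tools suggest; the paper only quotes this theorem and does not prove it. One claim in Step~1 is false, however, and needs repair. The group $H^q_J(\mathbb{Z}[X])_{\underline u}$ is finitely generated, being the cohomology of a complex of finite free $\mathbb{Z}$-modules that depends only on $\{i : u_i<0\}$. But it need not be free. Take $J$ to be the Stanley--Reisner ideal of the six-vertex triangulation of $\mathbb{RP}^2$. Then $\operatorname{cd}(J)=\operatorname{pd}(S/J)$ equals $3$ over $\mathbb{Q}$ and $4$ over $\mathbb{F}_2$, so the universal coefficient theorem forces $2$-torsion in some $H^q_J(\mathbb{Z}[X])_{\underline u}$. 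Consequently:
\begin{itemize}
\item $H^q_J(R)_{\underline u}$ is only a finitely generated $A$-module, not $A^r$;
\item $H^0_{(b)}$ of it can be a nonzero module of finite length;
\item $\mathfrak Z$ can contain primes containing none of the $a_k$ (take $A=\mathbb{Z}$ and all $a_k=1$).
\end{itemize}
None of this hurts you. Finite-length modules lie in your class, and (1) holds for every module over a Dedekind domain, so you never need $\mathfrak Z$ to be finite.

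Second, the class you name in Step~2 (artinian primary parts, no rank bound) is not closed under quotients; for example $A^{(\mathbb N)}\to (A/\p)^{(\mathbb N)}$. The rank-bounded class in your last paragraph is the right one. The closure property that actually needs an argument is closure under quotients, not under extensions. Let $\pi$ be a uniformizer of $A_\p$ and $0\to L'\to L\to L''\to 0$ exact with $\operatorname{rank} L'=r$. Then $\Gamma_\p(L'')$ is an extension of a submodule of $H^1_\p(L')$ by a quotient of $\Gamma_\p(L)$. Moreover $H^1_\p(L')\cong K^r/N'$, where $N'\subseteq K^r$ is torsion-free of rank $r$ over $A_\p$. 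Its socle $\pi^{-1}N'/N'\cong N'/\pi N'$ has dimension at most $r$, so $H^1_\p(L')$ is artinian. With these two fixes your proof goes through.

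On the comparison of routes: your (3) and (4) already are the direct argument (flat base change to $K[X]$, then divisible part plus Kaplansky over the complete DVR $T$). So everything rests on $\Gamma_\p(M_{\underline u})$ being artinian, and this paper's machinery gives that in one line. Localize at $\p$ and use $0\to R_\p\xrightarrow{\pi}R_\p\to k(\p)[X]\to 0$, as in Remark~\ref{remark of multiplication map by pi is linear}. The socle $\ker\bigl(\pi\colon (M_{\underline u})_\p\to (M_{\underline u})_\p\bigr)$ of $\Gamma_\p(M_{\underline u})$ is then a quotient of $H^{i-1}_{I}(k(\p)[X])_{\underline u}$. That space is finite-dimensional, because $I\,k(\p)[X]$ is a monomial ideal over a field. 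This is the same computation as $\ker(M_{\underline u}\xrightarrow{\pi}M_{\underline u})\cong F^{l(\underline u)+t(\underline u)}$ in the proof of Theorem~\ref{proof of theorem b}. It avoids all Mayer--Vietoris bookkeeping and the Serre-class verification. What your route buys is an explicit picture of each component assembled from the modules $H^p_{(b)}(H^q_J(R))_{\underline u}$, but the statement at hand does not need that extra structure. Neither argument uses finiteness of the residue fields.
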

It was further shown that if $A$ is a PID then each component can be written as a direct sum of its torsion part and torsion-free part. 

Let $K$ be a field of characteristic zero and let $R=A[X_1,\ldots,X_n]$ be a standard $\mathbb{N}^n$-graded polynomial ring over $A$ where $A=K[[Y]]$ is a formal power series ring in one variable. The authors proved an analogue of the structure theorem for components of $H^i_I(R)$ in this case (see, \cite[Theorem 7.1]{TS-23}). Let $\mathcal{B}(U)$ denote a block (see, \ref{blocks} below for definition) corresponding to a subset $U$ of $\{1,\ldots,n\}$. It is also proved that for a fixed prime ideal $\p$ in $A$ and $i\geq 0$, the set of Bass numbers $\{\mu_i(\p,M_{\underline{u}})\mid \underline{u}\in \mathbb{Z}^n\}$ is constant 
on $\mathcal{B}(U)$ for each subset $U$ of $\{1, \ldots, n\}$ (see, \cite[Theorem 5.6]{TS-23}).

 Consider the following setup.
\begin{setup}\label{main setup of our paper}
    Let $(A,\pi A,F)$ be a complete DVR of mixed characteristic $(0,p)$ and with finite residue field.  Suppose that $R=A[X_1,\ldots,X_n]$ is a standard $\mathbb{N}^n$-graded polynomial ring over $A$, i.e., $\deg A=\underline{0}\in \mathbb{N}^n$ and $\deg(X_j)=e_j\in \mathbb{N}^n$. Let $I$ be a $\mathfrak{C}$-monomial ideal of $R$ and let $M:= H^i_I(R)=\bigoplus_{\underline{u}\in \mathbb{Z}^n}M_{\underline{u}}$. By Theorem \ref{structure theorem due to TS},  there are finite numbers $l({\underline{u}}), \beta_j({\underline{u}}), t({\underline{u}}), a(\underline{u}),b(\underline{u})\geq 0$ (depending on   $\underline{u}$) such that
$$\Gamma_{\pi}(M_{\underline{u}})=E_A\left(\frac{A}{\pi A}\right)^{l({\underline{u}})} \oplus\left(\bigoplus_{j=1}^{t({\underline{u}})}\frac{A}{\pi^{\beta_j({\underline{u}})}A}\right)$$ and $$\overline{M_{\underline{u}}}:= \frac{M_{\underline{u}}}{\Gamma_{\pi}(M_{\underline{u}})}=A^{a(\underline{u})}\oplus K^{b(\underline{u})}$$
where $K$ is the quotient field of $A$.
\end{setup} 
\medskip
Let  for subset of $U$ of $\mathcal{S}=\{1, \ldots, n\}$, define a block to be the set
$\mathcal{B}(U)=\{\underline{u} \in \Z^n \mid u_i \geq 0 \mbox{ if } i \in U \mbox{ and } u_i \leq -1 \mbox{ if } i \notin U \}$. Note that $\bigcup_{U\subseteq \mathcal{S}}\mathcal{B}(U)=\mathbb{Z}^n$. We now state the results established in this paper. The first result among them is the following.
\medskip

\noindent
\textbf{Theorem A.} (Theorem \ref{proof of theorem a})
\phantomsection \label{Theorem A}
\textit{ Assume the hypothesis as in the Setup \ref{main setup of our paper}.
Then, the sets  $\{a(\underline{u})\mid \underline{u}\in \mathbb{Z}^n\}$ and $\{b(\underline{u})\mid \underline{u}\in \mathbb{Z}^n\}$ are constant on $\mathcal{B}(U)$ for each subset $U$ of $\{1, \ldots, n\}$.
}
\medskip

The next result we prove is the following.
\medskip

\noindent
\textbf{Theorem B.} (Theorem \ref{proof of theorem b})
\phantomsection \label{Theorem B}
\textit{ Assume the hypothesis as in the Setup \ref{main setup of our paper}. 
Then, the sets  $\{l(\underline{u})\mid \underline{u}\in \mathbb{Z}^n\}$ and $\{t(\underline{u})\mid \underline{u}\in \mathbb{Z}^n\}$ are constant on $\mathcal{B}(U)$ for each subset $U$ of $\{1, \ldots, n\}$.
}
\bigskip

Since  $[-1,1]^n\cap \mathbb{Z}^n$ is a finite set, we can define a finite number as $m:=\max\{\beta_j({\underline{u}})\mid 1\leq j\leq t({\underline{u}})\  \text{where} \ \underline{u}\in [-1,1]^n\cap\  \mathbb{Z}^n\}$. The following result proves that the set $\{\beta_j({\underline{u}})\mid 1\leq j\leq t({\underline{u}})\  \text{where} \ \underline{u}\in  \mathbb{Z}^n\}$ is bounded above by $m$.
\medskip

\noindent
\textbf{Theorem C.} (Theorem \ref{proof of proposition c})
\phantomsection \label{Theorem C}
\textit{ Let the hypothesis be as in Setup \ref{main setup of our paper} and $m$ be as in last paragraph. Then, the set $\{\beta_j({\underline{u}})\mid 1\leq j\leq t({\underline{u}})\  \text{where} \ \underline{u}\in  \mathbb{Z}^n\}$ is finite. In fact,  $\beta_j({\underline{u}})\leq m$ for all $1\leq j\leq t({\underline{u}})$ where  $\underline{u}\in  \mathbb{Z}^n$.
}

\bigskip
\begin{setup}\label{second setup}
Let $s=\max\{\beta_j({\underline{u}})\mid 1\leq j\leq t({\underline{u}})\  \text{where} \ \underline{u}\in  \mathbb{Z}^n\}$. The above theorem says that $s$ is a finite number and $s\leq m$. Let the hypothesis be as in Setup \ref{main setup of our paper}. Then there are numbers $\alpha_j(\underline{u})$ such that 
$$\Gamma_{\pi}(M_{\underline{u}})=E_A\left(\frac{A}{\pi A}\right)^{l({\underline{u}})} \oplus\left(\bigoplus_{j=1}^{s}\left(\frac{A}{\pi^jA}\right)^{\alpha_j(\underline{u})}\right).$$
\end{setup}
We should remark that in the above expression of $\Gamma_{\pi}(M_{\underline{u}})$, it is possible that $\alpha_j(\underline{u})=0$ for some $1\leq j\leq s$.

\medskip

\noindent
\textbf{Theorem D.} (Theorem \ref{proof of theorem D})
\phantomsection \label{Theorem D}
\textit{ Let the hypothesis be as in Setup \ref{second setup}. Let $1\leq j\leq s$. Then, the set $\{\alpha_j(\underline{u})\mid \underline{u}\in \mathbb{Z}^n\}$ is constant on $\mathcal{B}(U)$ for each subset $U$ of $\{1, \ldots, n\}$.
}

\bigskip
Assume the hypothesis as in  \ref{hypo-TS}. As mentioned earlier, the components of the local cohomology modules $H^i_I(R)$ need not be finitely generated. Consequently, one cannot generally expect these components to have finite Bass numbers. Nevertheless, somewhat unexpectedly, the authors  in \cite[Corollary 6.6]{TS-25} proved that for a fixed $\underline{u}\in\mathbb{Z}^n$, the Bass numbers $\mu_i(\p,M_{\underline{u}})$ are finite for each prime ideal $\p$ in $A$ and for every $i\geq 0$. As an application of our results, we prove the following surprising fact, which is the main and final result of this paper.

\medskip

\noindent
\textbf{Theorem E.} (Theorem \ref{Bass numbers are constant} )
\phantomsection \label{Theorem E}
\textit{Assume the hypothesis as in  \ref{hypo-TS}.  Fix a  prime ideal $\p\subseteq A$ and  $i\geq 0$. Then, the set of Bass numbers $\{\mu_i(\p,M_{\underline{u}})\mid \underline{u}\in \mathbb{Z}^n\}$ is constant 
on $\mathcal{B}(U)$ for each subset $U$ of $\{1, \ldots, n\}$.}
\medskip

We now describe in brief the contents of this paper. The paper is organized in seven sections. Section 2 introduces the necessary preliminaries. In Section 3, we prove some auxiliary results that are needed throughout the paper. Section 4 is devoted to the proof of \hyperref[Theorem A]{Theorem A}. In section 5, we prove \hyperref[Theorem B]{Theorem B}. In Section 6, we prove \hyperref[Theorem C]{Theorem C} and \hyperref[Theorem D]{Theorem D}. Finally, the last section proves  \hyperref[Theorem E]{Theorem E}.
\section{Preliminaries}
In this section, we review the necessary preliminaries and recall several well-known results from the literature. More specifically, we recall the theory of Eulerian modules introduced by Ma and Zhang \cite{MaZh-14}, as well as the theory of straight modules due to Yanagawa \cite{Yanagawa-01}.

Let $S=K[X_1,\ldots,X_n]$ be a standard $\mathbb{N}$-graded polynomial ring over a field $K$ of arbitrary characteristic i.e., $\deg X_i=1$, and $\deg a=0$ for all $X_i$ and nonzero $a\in K$. Denote by $\m$, the homogeneous maximal ideal $\m=(X_1,\ldots,X_n)$ of $S$. Let $\D$ be the corresponding ring of differential operators on $S$ i.e., $\D=S\left\langle\partial_i^{[j]} \mid 1 \leq i \leq n,  1 \leq j\right\rangle$ where $\partial_i^{[j]}=\frac{1}{j!}\frac{\partial^j}{\partial x_i^j}$. Note that $\D$ has natural $\mathbb{Z}$-grading given by $\deg X_i=1$, $\deg \partial_i^{[j]}=-j$, and $\deg c=0$ for each $X_i$, $\partial_i^{[j]}$ and each nonzero $c\in K$. Now we define a $\mathbb{Z}$-graded Eulerian $\D$-module which was introduced by Ma and Zhang in \cite{MaZh-14}.
 \begin{definition}
      A $\mathbb{Z}$-graded $\D$-module $M$ is called  Eulerian if for every $r\geq 1$, each homogeneous element $m\in M$ satisfies $$E_r\cdot  m=\binom{\deg m}{r}m$$ where $$E_r=\sum_{i_1+i_2+\cdots+i_n=r, i_1 \geq 0, \ldots, i_n \geq 0} X_1^{i_1} \cdots X_n^{i_n} \partial_1^{\left[i_1\right]} \cdots \partial_n^{\left[i_n\right]}.$$
 \end{definition}
 It is shown in \cite{MaZh-14} that the polynomial ring $S$ is Eulerian and for homogeneous ideals $I_1,\ldots,I_t$ of $S$, $H^{i_1}_{I_1}\ldots H^{i_t}_{I_t}(S)$ is also Eulerian as $\D$-module. In fact, the following result is of particular importance to us.
 \begin{theorem}\cite[Theorem 5.6]{MaZh-14}\label{theorem of ma and zhang direct sum}
     Let $M$ be an  $\mathbb{Z}$-graded Eulerian $\D$-module. If $\Supp_RM=\{\m\}$, then $M$ as a graded $\D$-module is a sum of \ $^*E(n)$ where \ $^*E$ is the graded injective hull of $S/\m$.
 \end{theorem}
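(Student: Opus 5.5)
The plan is to reduce to the ungraded structure theorem for $\D$-modules supported at a maximal ideal, and then use the Eulerian condition to pin down the grading shift. First, since $\Supp_S M=\{\m\}$, every element of $M$ is killed by a power of $\m$, so the socle $V:=(0:_M\m)$ is an essential submodule of $M$. Because $\m$ is homogeneous, $V$ is a graded $K$-subspace and has a homogeneous $K$-basis $\{z_\lambda\}$. I will invoke the known ungraded result, valid in every characteristic (Kashiwara's equivalence in characteristic zero, and Lyubeznik's theorem for modules over the full ring of differential operators with divided powers in characteristic $p$): a $\D$-module supported at $\m$ is isomorphic to $V\otimes_K E$ with $E=H^n_\m(S)$. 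In this isomorphism $V$ corresponds to $V\otimes\delta$, where $\delta=[1/(X_1\cdots X_n)]$ spans the socle of $E$. In particular, each socle element $z$ generates a $\D$-submodule $\D z\cong E$ with $z\mapsto\delta$.

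The key computation is to determine $\deg z$ for a homogeneous socle element $z$. In $E$ we have $X_j^{i}\partial_j^{[i]}\cdot X_j^{-1}=(-1)^iX_j^{-1}$, and the variables act independently on $\delta$. Hence $X_1^{i_1}\cdots X_n^{i_n}\partial_1^{[i_1]}\cdots\partial_n^{[i_n]}\delta=(-1)^r\delta$ whenever $i_1+\cdots+i_n=r$. Summing over all compositions of $r$ into $n$ parts gives
$$E_r\cdot z=(-1)^r\binom{n+r-1}{r}z=\binom{-n}{r}z .$$
On the other hand, the Eulerian condition gives $E_r\cdot z=\binom{\deg z}{r}z$. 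Therefore $\binom{\deg z}{r}=\binom{-n}{r}$ in $K$ for every $r\geq 1$.

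In characteristic zero, taking $r=1$ already gives $\deg z=-n$. In characteristic $p$, I will use Lucas' theorem on $p$-adic digits: the values $\binom{d}{p^k}\bmod p$ recover the $p$-adic digits of $d$. Equality of these values for all $k$ therefore forces $d=-n$ as $p$-adic integers, hence as integers. I expect this step, together with checking that the ungraded isomorphism really sends a homogeneous socle element to a multiple of $\delta$, to be the main obstacle. The point is that the identification $\D z\cong E$ is what makes the computation of $E_r z$ legitimate.

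With all socle elements in degree $-n$, I define the graded $\D$-linear map $\Phi:\bigoplus_\lambda {}^*E(n)\to M$ sending the socle generator of the $\lambda$-th copy to $z_\lambda$. This map is homogeneous of degree $0$, because $\delta$ has degree $-n$ in ${}^*E$, so it sits in degree $-n$ in ${}^*E(n)$, matching $\deg z_\lambda=-n$. It restricts to an isomorphism on socles, and the source is supported at $\m$ with essential socle. Hence $\ker\Phi$ has zero socle and therefore vanishes, so $\Phi$ is injective. Its image is a $\D$-submodule of $M$ containing $V$, and by the ungraded decomposition $M=\sum_\lambda\D z_\lambda$, so $\Phi$ is surjective. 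Thus $M\cong\bigoplus {}^*E(n)$ as graded $\D$-modules.
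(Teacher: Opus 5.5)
The paper gives no proof of this statement; it quotes it from Ma--Zhang, so there is no internal argument to compare against. Your proof is correct, and it is the natural route. You first reduce to the ungraded structure theorem (Kashiwara in characteristic $0$, Lyubeznik's characteristic-free injectivity theorem in characteristic $p$). You then use the Eulerian operators to force every homogeneous socle element into degree $-n$. Your use of Lucas' theorem for negative $d$ is legitimate, because $\binom{d}{p^k} \bmod p$ depends only on $d \bmod p^{k+1}$. Finally, you compare with $\bigoplus {}^*E(n)$ through socles.

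There are three small points.

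First, a bookkeeping slip. The socle of ${}^*E$ lives in degree $0$. It is in ${}^*E(n)\cong H^n_\m(S)$ that the socle element $\delta$ has degree $-n$. As you wrote it (``$\delta$ has degree $-n$ in ${}^*E$''), $\delta$ would land in degree $-2n$ of ${}^*E(n)$. Your conclusion is nevertheless the right one.

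Second, the step you flag as the main obstacle is not one. Any $S$-isomorphism $M\cong V\otimes_K E$ carries socle onto socle $V\otimes\delta$, so $z\mapsto v\otimes\delta$ with $v\neq 0$. You can even bypass the identification. Checking on monomials via Vandermonde gives $\partial_j^{[r]}X_j^r=\sum_{k=0}^r\binom{r}{k}X_j^k\partial_j^{[k]}$ in $\D$. Binomial inversion then yields $X_j^r\partial_j^{[r]}=\sum_{k=0}^r(-1)^{r-k}\binom{r}{k}\partial_j^{[k]}X_j^k$. Hence $X_j^r\partial_j^{[r]}z=(-1)^rz$ holds directly in $M$ for every $z$ with $\m z=0$.

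Third, the characteristic-$p$ citation can be replaced by an elementary argument. The module $\D/\D\m\cong E$ is a simple $\D$-module. Hence $\D V$ is a direct sum of copies of $E$, and therefore it is $S$-injective. Its $S$-complement in $M$ is $\m$-torsion with zero socle, so it vanishes.
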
 
Now we consider an $\mathbb{N}^n$-grading on $S$ such that $S=\bigoplus_{\underline{u}\in \mathbb{N}^n}S_{\underline{u}}= \bigoplus_{\underline{u}\in \mathbb{N}^n}\underline{X}^{\underline{u}}$, where $\underline{X}^{\underline{u}}=\prod_{i=1}^n X_i^{u_i}$ is the monomial with exponent ${\underline{u}\in \mathbb{N}^n}$. For $\underline{u}=(u_1,\ldots,u_n)\in \mathbb{Z}^n$, let $\operatorname{supp}_{+}(\underline{u}):=\{i\mid u_i>0\}$. By $\deg y$, we denote the homogeneous degree of an element $y$. The corresponding ring of differential operators $\D$ of $S$ is naturally $\mathbb{Z}^n$-graded where $\deg(X_i)=e_i$ for all $i$, $\deg \partial_i^{[j]}=-e_j$ for all $j$, and $\deg a=\underline{0}$ for all nonzero $a\in K$.
\begin{definition}
    For $i,r\geq 1$, define $E^i_r :=X_i^r\  \partial_i^{[r]}$. A $\mathbb{Z}^n$-graded $\D$-module $M$ is called a Eulerian if for every $i\geq 1$, each  element $y\in M$ with $\deg y=\underline{u}$ satisfies
    $$E^r_i\cdot y=\binom{u_i}{r}\cdot y$$
  for all $r\geq 1$.
\end{definition}
\begin{lemma}
    The polynomial ring $S$  itself is a $\mathbb{Z}^n$-graded Eulerian $\D$-module.
\end{lemma}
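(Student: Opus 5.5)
The plan is a direct computation on monomials. The grading places each monomial $\underline{X}^{\underline{v}}$, with $\underline{v}\in\mathbb{N}^n$, in degree $\underline{v}$, and $S_{\underline{v}}=K\underline{X}^{\underline{v}}$. Every homogeneous element $y$ of degree $\underline{v}$ is therefore a scalar multiple $c\,\underline{X}^{\underline{v}}$. Since each $E^r_i$ is $K$-linear, it suffices to check the Eulerian identity on monomials. Two preliminary facts are needed.
\begin{enumerate}
\item $S$ is a $\mathbb{Z}^n$-graded $\D$-module, because the action of $X_i$ and $\partial_i^{[j]}$ shifts degrees by $e_i$ and $-je_i$ respectively. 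Here we read the degree of $\partial_i^{[j]}$ as $-je_i$, which is the intended meaning of the stated grading.
\item The divided-power formula
$$\partial_i^{[r]}\bigl(X_i^{v}\bigr)=\binom{v}{r}X_i^{v-r},$$
with the convention that this is $0$ when $r>v$.
\end{enumerate}

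For the second fact, in characteristic zero it follows from $\frac{1}{r!}\frac{d^r}{dX_i^r}X_i^v=\frac{v(v-1)\cdots(v-r+1)}{r!}X_i^{v-r}$. In arbitrary characteristic, $\partial_i^{[r]}$ is defined on $K[X_1,\dots,X_n]$ by exactly this formula with integer binomial coefficients, via base change from $\mathbb{Z}[X_1,\ldots,X_n]$. So the formula holds over any field $K$.

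Next we use that $\partial_i^{[r]}$ acts only on the variable $X_i$ and treats the other $X_k$ as constants. For $\underline{v}\in\mathbb{N}^n$ this gives
$$E^r_i\cdot \underline{X}^{\underline{v}}=X_i^r\,\partial_i^{[r]}\Bigl(X_i^{v_i}\prod_{k\neq i}X_k^{v_k}\Bigr)=\binom{v_i}{r}X_i^{r}X_i^{v_i-r}\prod_{k\neq i}X_k^{v_k}=\binom{v_i}{r}\underline{X}^{\underline{v}}.$$
When $r>v_i$, both sides are zero, since $\binom{v_i}{r}=0$ for $0\le v_i<r$. This is exactly the required identity $E^r_i\cdot y=\binom{u_i}{r}y$ with $\underline{u}=\underline{v}$. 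Linearity then extends it to all homogeneous elements of degree $\underline{v}$.

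The only step needing real care is the characteristic-free justification of the divided-power formula. I would handle it by noting that the $\partial_i^{[j]}$ on $S$ are, by definition, the reductions of the operators on $\mathbb{Z}[\underline{X}]$, so the identity is inherited from characteristic zero. Everything else is routine bookkeeping.
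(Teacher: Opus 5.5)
Your proposal is correct and takes the same route as the paper: reduce to monomials and compute $E^r_i\cdot \underline{X}^{\underline{v}}=\binom{v_i}{r}\underline{X}^{\underline{v}}$, using that $\partial_i^{[r]}$ acts only on the variable $X_i$. Your additional points make explicit what the paper's short computation leaves implicit, namely the divided-power formula in arbitrary characteristic, the case $r>v_i$, and reading the degree of $\partial_i^{[j]}$ as $-je_i$.
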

\begin{proof}
    It is enough to prove for monomials of the form $X_1^{j_1}X_2^{j_2}\ldots X_n^{j_n} $. Let this monomial be $f$. Then $\deg f=(j_1,j_2,\ldots,j_n)$. Fix a $i\geq 1$.
    Then,
    \begin{align*}
        E^r_i\cdot f\ &=X_i^r\  \partial_i^{[r]} (X_1^{j_1}X_2^{j_2}\ldots X_n^{j_n})\\&= X_1^{j_1}\ldots X_{i-1}^{j_{i-1}}X_{i+1}^{j_{i+1}}\ldots X_n^{j_n} X_i^r\  \partial_i^{[r]}(X_i^{j_i})\\&=X_1^{j_1}\ldots X_{i-1}^{j_{i-1}}X_{i+1}^{j_{i+1}}\ldots X_n^{j_n} \binom{j_i}{r}X_i^{j_i}\\&= \binom{j_i}{r} f
    \end{align*} 
    for all $r\geq 1$.
\end{proof}
The following proposition shows that if a module is $\mathbb{Z}^n$-graded Eulerian $\D$-module, this property is preserved when we localize the module at a homogeneous element.
\begin{proposition}\label{localization is eulerian}
    If $M$ is is a $\mathbb{Z}^n$-graded Eulerian $\D$-module, so is $T^{-1}M$ for each homogeneous multiplicative system $T\subset S$. In particular, $M_f$ for each homogeneous element $f\in S$ is a $\mathbb{Z}^n$-graded Eulerian $\D$-module.
\end{proposition}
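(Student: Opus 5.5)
We need to show that $T^{-1}M$ carries a $\mathbb{Z}^n$-graded $\D$-module structure and that the Euler identities $E^r_i\cdot z=\binom{u_i}{r}z$ hold for homogeneous $z\in T^{-1}M$ of degree $\underline{u}$. Homogeneous elements of $S$ under the $\mathbb{N}^n$-grading are scalar multiples of monomials, so every element of $T$ has the form $cX^{\underline{a}}$ with $0\neq c\in K$. Since $c$ is a unit, the plan is to reduce to $T$ generated by monomials. In fact $T^{-1}M$ equals the localization at the product of the variables occurring in $T$. It is therefore enough to treat $M_{X_k}$ for a single variable $X_k$ and then iterate, because $M_{X_{k}X_l}=(M_{X_k})_{X_l}$.

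\textbf{Step 1: the $\D$-module structure.} It is standard that the localization of a $\D$-module at an element $f$ is again a $\D$-module. For the divided-power operators we use the Leibniz-type formula. For $f=X_k$ we define
$$\partial_i^{[r]}\Big(\frac{y}{X_k^t}\Big)=\sum_{a+b=r}\partial_i^{[a]}\big(X_k^{-t}\big)\,\partial_i^{[b]}(y),$$
where $\partial_i^{[a]}(X_k^{-t})=\delta_{ik}$-type binomial expressions $\binom{-t}{a}X_k^{-t-a}$ when $i=k$, and $0$ when $i\ne k$ and $a>0$. One checks this is well defined and compatible with the relations of $\D$. Alternatively, one can cite the general fact that $M_f\cong S_f\otimes_S M$ is a $\D$-module for any $\D$-module $M$. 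The grading is the obvious one: $\deg(y/X_k^t)=\deg y-te_k$. Homogeneity of the action follows since every term above has degree $\deg y-te_k-re_i$.

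\textbf{Step 2: the Euler identity.} This is the main computation. Take $y\in M$ homogeneous of degree $\underline{v}$ and $z=y/X_k^t$, so that $z$ has degree $\underline{u}=\underline{v}-te_k$.

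If $i\neq k$, then $E^r_i$ commutes with multiplication by $X_k^{-t}$, so $E^r_i z=\binom{v_i}{r}z=\binom{u_i}{r}z$.

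If $i=k$, the formula gives
$$E_k^r z=X_k^r\sum_{a+b=r}\binom{-t}{a}X_k^{-t-a}\partial_k^{[b]}y=\sum_{a+b=r}\binom{-t}{a}X_k^{-t}\,X_k^{b}\partial_k^{[b]}y.$$
Applying the Eulerian property of $M$ turns each term $X_k^{b}\partial_k^{[b]}y$ into $\binom{v_k}{b}y$, including $b=0$ where $E^0=1$. The sum then becomes
$$\sum_{a+b=r}\binom{-t}{a}\binom{v_k}{b}\,z=\binom{v_k-t}{r}z$$
by the Vandermonde identity, which holds for arbitrary integers as a polynomial identity. Since $v_k-t=u_k$, this is exactly the required identity.

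The main obstacle is Step 1: making the divided-power action on the localization rigorous in arbitrary characteristic, since one cannot simply divide by $r!$. The plan there is to use the product rule $\partial^{[r]}(gh)=\sum_{a+b=r}\partial^{[a]}(g)\partial^{[b]}(h)$ on $S_{X_k}$ and the integrality of $\binom{-t}{a}$. Alternatively, we cite the known result that localizations of $\D$-modules are $\D$-modules, as in Lyubeznik's work in characteristic $p$ via $F$-modules. Once the structure is in place, Step 2 is purely formal.
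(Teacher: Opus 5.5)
Your proof is correct, but it takes a genuinely different route from the paper.

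\emph{The paper's route.} The paper never reduces to monomials. For an arbitrary homogeneous $f\in T$ it uses that $\partial_i^{[r]}$ is $S^{p^e}$-linear once $p^e\geq r+1$. Rewriting $m/f^t=f^{p^e-t}m/f^{p^e}$, this gives $E^r_i(m/f^t)=f^{-p^e}E^r_i(f^{p^e-t}m)$. The Eulerian identity is then applied inside $M$ to the element $f^{p^e-t}m$. The extra $p^er_i$ in its degree disappears because $\binom{p^er_i+c}{r}\equiv\binom{c}{r}\pmod p$ for $r<p^e$. This argument never needs an explicit formula for the action on fractions. It only needs $T^{-1}M$ to carry a $\D$-module structure for which $M\to T^{-1}M$ is $\D$-linear, and the paper, like your Step 1, treats that as standard. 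It is, however, intrinsically a characteristic-$p$ argument.

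\emph{Your route.} You use the fine $\mathbb{Z}^n$-grading (homogeneous elements are scalar multiples of monomials) to reduce to inverting a single variable. The Leibniz expansion with coefficients $\binom{-t}{a}$, combined with Vandermonde, then yields $\binom{v_k-t}{r}$. This is characteristic-free, so it also covers characteristic zero, which the paper's Frobenius trick does not. (The paper only needs characteristic $p$, since it applies the result over $F=A/\pi A$.)

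\emph{On the obstacle you flag in Step 1.} If you take the existence of the $\D$-structure on $M_{X_k}$ for granted, as the paper does, your formula for $\partial_k^{[r]}(y/X_k^t)$ is forced. Apply the identity $\partial_k^{[r]}X_k^t=\sum_{a}\binom{t}{a}X_k^{t-a}\partial_k^{[r-a]}$ in $\D$ to $z=y/X_k^t$. Since $X_k$ acts invertibly, this determines $\partial_k^{[r]}z$ recursively in $r$. So no separate check of the relations of $\D$ is needed. The same observation shows that the $\D$-structure on a localization is unique. That justifies identifying $T^{-1}M$ with the iterated localization at the relevant variables as $\D$-modules.
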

\begin{proof}
    We note that $\partial_i^{[r]}$ is $S^{p^e}$-linear if $p^e \geq r+1$. So we have
$$
\partial_i^{[r]}(m)=\partial_i^{[r]}\left(f^{p^e} \cdot \frac{m}{f^{p^e}}\right)=f^{p^e} \cdot \partial_i^{[r]}\left(\frac{m}{f^{p^e}}\right).
$$

So if $p^e \geq r+1$ and $f \in T$, then $\partial_i^{[r]}\left(\frac{m}{f^{p^e}}\right)=\frac{1}{f^{p^e}} \partial_i^{[r]}(m)$  in $T^{-1} M$. Therefore,
$$
E^r_i \cdot \frac{m}{f^{p^e}}=\frac{1}{f^{p^e}} E^r_i \cdot m.
$$
Take $\frac{m}{f^t} \in T^{-1} M$ where $\deg(m)=(m_1,\ldots,m_n)$ and $\deg(f)=(r_1,\ldots,r_n)$. Then $\deg(\frac{m}{f^t})=(m_1-tr_1,\ldots,m_n-tr_n)$. Multiplying both the numerator and denominator by a large power of $f$, we write $\frac{m}{f^t}=\frac{f^{p^e-t} m}{f^{p^e}}$ for some $p^e \geq \max \{r+1, t\}$. Now
\begin{align*}
    E^r_i \left( \frac{m}{f^t} \right) & =  E^r_i \left( \frac{f^{p^e - t} m}{f^{p^e}} \right) \\
    & = \frac{1}{f^{p^e}} E^r_i ( f^{p^e - t} m ).
\end{align*}
 We note that \begin{align*}
     \deg( f^{p^e - t} m) &=\deg( f^{p^e - t})+\deg(m)\\&=p^e\deg(f)-\deg(f^t)+\deg(m)\\&= p^e(r_1,\ldots,r_n)-t(r_1,\ldots,r_n)+(m_1,\ldots,m_n)\\&=(p^er_1-tr_1+m_1,\ldots,p^er_n-tr_n+m_n).
 \end{align*} 
 Hence
 \begin{align*}
     E^r_i \left( \frac{m}{f^t} \right) &=\frac{1}{f^{p^e}}\binom{p^er_i+m_i-tr_i}{r}f^{p^e - t} m\\&= \binom{m_i-tr_i}{r} \frac{m}{f^t}.
  \end{align*}
\end{proof} 
Also if $M$ is a $\mathbb{Z}^n$-graded Eulerian $\D$-module, so each $\mathbb{Z}^n$-graded submodule and $\mathbb{Z}^n$-graded quotients of $M$ (proof is similar to \cite[Proposition 2.8]{MaZh-14}).
\begin{remark}\label{remark of multigraded eulerian}\normalfont
  The natural localization map $S\rightarrow S_f$ is a morphism of $\D$-modules. Let $I\subseteq S$ be a homogeneous ideal of $S$ generated by homogeneous sequence of elements $\underline f= f_1,f_2,\ldots,f_r\in S$. Then the \v Cech complex of $S $ with respect to  $\underline f$ is defined by 
  \begin{equation}\label{equation of cech complex}
      \Check{C}^{\bullet}(\underline f,S):\  0\rightarrow S\rightarrow \bigoplus_iS_{f_i}\rightarrow \bigoplus_{i,j}S_{f_if_j}\rightarrow\ldots \rightarrow S_{f_1\ldots f_r}\rightarrow 0
  \end{equation}
		where the maps on every summand are localization map up to a sign. The local cohomology module of $S$ with support on $I$  is defined by 
		$$H_I^i(S)=H^i( \Check{C}^{\bullet}(\underline f,S)).$$
		Since $S$ is $\mathbb{Z}^n$-graded Eulerian as a $\D$-module, so is each localization in equation \ref{equation of cech complex} by Proposition \ref{localization is eulerian}. Therefore, $H^i_I(S)$ is $\mathbb{Z}^n$-graded Eulerian as $\D$-module.
\end{remark}

Now we recall the definition of straight module which was introduced by Yanagawa in \cite{Yanagawa-01}.
\begin{definition}
    A $\mathbb{Z}^n$-graded $S$-module $M=\bigoplus_{\underline{u}\in \mathbb{Z}^n}M_{\underline{u}}$ is called straight, if the following two conditions are satisfied.
    \begin{enumerate}[\rm (a)]
        \item$ \dim_K M_{\underline{u}}<\infty$ for all $\underline{u}\in \mathbb{Z}^n$.
        \item The multiplication map $M_{\underline{u}}\ni Y\rightarrow \underline{X}^{\underline{v}} Y\in M_{\underline{u}+\underline{v}} $ is bijective for all $\underline{u}\in \mathbb{Z}^n$ and  $\underline{v}\in \mathbb{N}^n$ with $\operatorname{supp}_{+}(\underline{u}+\underline{v})= \operatorname{supp}_{+}(\underline{u})$.
    \end{enumerate}
\end{definition}
 Musta\c t\v a \cite{Mustata-20} and Terai \cite{Terai-98} proved the following result which is crucial to prove many of our results.
 \begin{theorem}\cite{Mustata-20, Terai-98}\label{theorem of terai and mustata}
   Let  $I_{\scriptscriptstyle \triangle}$
 be a squarefree monomial ideal. For all $i\geq 0$, the local cohomology module $H^i_{I_{\scriptscriptstyle \triangle}}(\omega_S)=H^i_{I_{\scriptscriptstyle \triangle}}(S)(-1,\ldots,-1)$ is a straight module.
 \end{theorem}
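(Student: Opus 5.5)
The plan is to compute $H^i_{I_{\scriptscriptstyle \triangle}}(\omega_S)$ by a \v Cech complex, as in Remark \ref{remark of multigraded eulerian}, and to show that every term of that complex is already straight in a strong sense. Straightness will then pass to cohomology degree by degree. Let $m_1,\ldots,m_r$ be the squarefree monomial generators of $I_{\scriptscriptstyle \triangle}$, and twist the complex (\ref{equation of cech complex}) by $(-1,\ldots,-1)$. Its terms are direct sums of modules $\omega_S$ localized at $m_J=\prod_{j\in J}m_j$. Since $m_J$ has the same radical as the squarefree monomial $\underline{X}^{F}=\prod_{k\in F}X_k$, where $F=F(J)=\bigcup_{j\in J}\operatorname{supp}(m_j)$, we have $(\omega_S)_{m_J}=(\omega_S)_{\underline{X}^F}$. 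All differentials are sums of $\pm$ localization maps, so they preserve $\mathbb{Z}^n$-degree and are $S$-linear.

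\emph{Step 1: multigraded components of each term.} We have $\omega_S=S(-1,\ldots,-1)$, so its degree-$\underline{u}$ part is $K\cdot\underline{X}^{\underline{u}}$ when $u_k\geq 1$ for all $k$, and zero otherwise. Hence $(\omega_S)_{\underline{X}^F}$ has degree-$\underline{u}$ component $K\cdot\underline{X}^{\underline{u}}$, a Laurent monomial, exactly when $u_k\geq 1$ for all $k\notin F$, and zero otherwise. So each component has dimension $0$ or $1$, and whether it is nonzero depends only on the set $\operatorname{supp}_+(\underline{u})$. Multiplication by $\underline{X}^{\underline{v}}$ with $\underline{v}\in\mathbb{N}^n$ sends $\underline{X}^{\underline{u}}$ to $\underline{X}^{\underline{u}+\underline{v}}$, which is nonzero because the localization is torsion free.

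\emph{Step 2: each term is straight.} Suppose $\operatorname{supp}_+(\underline{u}+\underline{v})=\operatorname{supp}_+(\underline{u})$. By Step 1 the components in degrees $\underline{u}$ and $\underline{u}+\underline{v}$ are either both zero or both one-dimensional. In the latter case, multiplication by $\underline{X}^{\underline{v}}$ is a nonzero map between one-dimensional spaces, hence bijective. Finite dimensionality is clear, since each term of the complex is a finite direct sum of such modules.

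\emph{Step 3: pass to cohomology.} For fixed $\underline{u}$, take the degree-$\underline{u}$ strand $C^\bullet_{\underline{u}}$, which is a complex of finite-dimensional $K$-vector spaces. This already gives finite dimensionality of $H^i_{I_{\scriptscriptstyle \triangle}}(\omega_S)_{\underline{u}}$. Multiplication by $\underline{X}^{\underline{v}}$ commutes with the differentials because they are $S$-linear. It is therefore a chain map $C^\bullet_{\underline{u}}\to C^\bullet_{\underline{u}+\underline{v}}$, and by Step 2 it is an isomorphism in each degree whenever $\operatorname{supp}_+(\underline{u}+\underline{v})=\operatorname{supp}_+(\underline{u})$. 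A chain isomorphism induces isomorphisms on cohomology, and the induced map is exactly multiplication by $\underline{X}^{\underline{v}}$ on $H^i_{I_{\scriptscriptstyle \triangle}}(\omega_S)$. So condition (b) holds, and the module is straight.

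The only delicate point is Step 1: one must check that the condition "$u_k\geq 1$ for $k\notin F$" is governed by $\operatorname{supp}_+(\underline{u})$ alone. It is, because $u_k\geq 1$ is the same as $k\in\operatorname{supp}_+(\underline{u})$, and this is exactly where the shift by $(-1,\ldots,-1)$ is used. Everything else is formal.
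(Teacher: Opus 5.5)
Your proof is correct. It takes a different route from the sources the paper relies on, since the paper gives no proof of this statement and only cites Musta\c t\v a and Terai. Their arguments go through Hochster-type formulas. These express each graded piece $H^i_{I_{\scriptscriptstyle \triangle}}(S)_{\underline{a}}$ as the reduced (co)homology of a simplicial complex attached to $\triangle$, depending only on which coordinates of $\underline{a}$ are negative, and the behaviour of multiplication by the variables is then read off from this description.

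You avoid the combinatorics entirely, using two observations:
\begin{enumerate}
\item Each \v{C}ech term $(\omega_S)_{\underline{X}^F}$ is straight, because its graded pieces are zero or one-dimensional and whether a piece vanishes is governed by $\operatorname{supp}_+$.
\item Straightness passes to the cohomology of any complex of straight modules with degree-preserving $S$-linear differentials. In that situation multiplication by $\underline{X}^{\underline{v}}$ is an isomorphism of degree strands. Unlike Proposition \ref{U and V are straight}, no Eulerian input is needed, since every term of your complex is straight.
\end{enumerate}

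The cited route yields more: explicit dimensions of the graded pieces, Hilbert series and vanishing criteria. Yours is shorter and characteristic-free. It also never uses squarefreeness, so it applies verbatim to the ideals $IS\subseteq F[X_1,\ldots,X_n]$ and $I\,K[X_1,\ldots,X_n]$ to which the paper actually applies the theorem. These are monomial but need not be squarefree, and the paper tacitly passes to radicals there.

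One cosmetic point: with $\omega_S=S(-1,\ldots,-1)$ taken literally, $(\omega_S)_{\underline{u}}=S_{\underline{u}-\underline{1}}$ is spanned by $\underline{X}^{\underline{u}-\underline{1}}$. Your labeling by $\underline{X}^{\underline{u}}$ amounts to identifying $\omega_S$ with the ideal $X_1\cdots X_nS$, which is harmless.
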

We use the following descriptions of blocks as given in \cite{TS-23}, and we adopt the same terminology throughout the paper. 

 \begin{point} \normalfont\textbf{Blocks:}\label{blocks}
     Let $\mathcal{S}$ denote the set $\{1, \ldots, n\}$ and $U$ be a subset (may be empty) of $\mathcal{S}$. 

\noindent
\begin{minipage}{.7\textwidth}
\hspace{0.25cm} We define a block to be
$$\mathcal{B}(U)=\{\underline{u} \in \Z^n \mid u_i \geq 0 \mbox{ if } i \in U \mbox{ and } u_i \leq -1 \mbox{ if } i \notin U \}.$$

Let $n=2$.
In the figure on the right, $\mathcal{B}({\{1,2\}}), \mathcal{B}({\{2\}}), \mathcal{B}({\phi}),$ and $\mathcal{B}({\{1\}})$ are presented by the shaded parts $B_1, B_2, B_3,$ and $B_4$ respectively.
\end{minipage}
\begin{minipage}{.25\textwidth}
\begin{center}
	\begin{tikzpicture}[scale=0.15]
	\draw[->] (-8.5,0)--(9,0) node[right]{$u$};
	\draw[->] (0,-8.5)--(0,9) node[above]{$v$};
	\draw[densely dotted, red] (-2,8.5)--(-2,-8.5);
	\draw[densely dotted, red] (-8.5,-2)--(8.5, -2);
	\draw[fill=red,fill opacity=0.35,draw=none] (0,8.5)--(0,0)--(8.5,0)--(8.5,8.5)--(0,8.5);
	\draw[fill=red,fill opacity=0.35,draw=none] (-2,8.5)--(-2,0)--(-8.5,0)--(-8.5,8.5)--(-2,8.5);
	\draw[fill=red,fill opacity=0.35,draw=none] (-2,-8.5)--(-2,-2)--(-8.5,-2)--(-8.5,-8.5)--(-2,-8.5);
	\draw[fill=red,fill opacity=0.35,draw=none] (0,-8.5)--(0,-2)--(8.5,-2)--(8.5,-8.5)--(0,-8.5);
	\node[blue,draw=none] at (4,-6) {$B_4$};
	\node at (0,-2){\textcolor{red}{$\bullet$}};
	\node[blue,draw=none] at (4,4) {$B_1$};
	\node at (0,0){\textcolor{red}{$\bullet$}};
	\node[blue,draw=none] at (-6,4) {$B_2$};
	\node at (-2,0){\textcolor{red}{$\bullet$}};
	\node[blue,draw=none] at (-6,-6) {$B_3$};
	\node at (-2,-2){\textcolor{red}{$\bullet$}};
    \node[blue,draw=none] at (3,-3) {\tiny $(0,-1)$};
	\node[blue,draw=none] at (2.5,1) {\tiny $(0,0)$};
	\node[blue,draw=none] at (-5,1) {\tiny $(-1,0)$};
	\node[blue,draw=none] at (-5.5,-3) {\tiny $(-1,-1)$};
	\node at (0,-11.5) {\textit{Figure: Blocks}};
	\end{tikzpicture}
	\end{center}
\end{minipage}
\end{point}
\section{Some preparatory results}
In this section, we prove some auxiliary results that are used throughout the proofs of our results. We begin with defining quasi-straight module over a polynomial ring with coefficients in an arbitrary Noetherian ring.
\begin{definition}
    Let $S=T[X_1,\ldots,X_n]$ be a standard $\mathbb{N}^n$-graded polynomial ring over an arbitrary Noetherian ring $T$. We say that a $\mathbb{Z}^n$-graded $S$-module $M=\bigoplus_{\underline{u}\in \mathbb{Z}^n}M_{\underline{u}}$ is a quasi-straight module if $M_{\underline{u}}\cong M_{\underline{u}+\underline{v}}$ for all $\underline{u}\in \mathbb{Z}^n$ and  $\underline{v}\in \mathbb{N}^n$ with $\operatorname{supp}_{+}(\underline{u}+\underline{v})= \operatorname{supp}_{+}(\underline{u})$.
\end{definition}
It is clear that if $T$ is a field, then a straight module over $S$ is quasi-straight. For a $\mathbb{Z}^n$-graded module $M=\bigoplus_{\underline{u}\in \mathbb{Z}^n}M_{\underline{u}}$, let $M(\underline{-1})$ denote the module $M(-1,\ldots,-1)$. Let $\mathcal{B}(U)$ denote a block (see, \ref{blocks} for definition) corresponding to a subset $U$ of $\mathcal{S}=\{1,\ldots,n\}$. Note that $\bigcup_{U\subseteq \mathcal{S}}\mathcal{B}(U)=\mathbb{Z}^n$. The following observation shows that if a module shifted by degree $\underline{-1}$ is quasi-straight, then its components are isomorphic in each block.
\begin{lemma}\label{iso on each block}
Let    $M(\underline{-1})$ be a quasi-straight $S$-module. Let $U$ be a subset of $\{1,\ldots,n\}$. Then, $M_{\underline{u}}\cong M_{\underline{v}}$ for $u,v\in \mathcal{B}(U)$.
\end{lemma}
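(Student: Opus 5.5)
The plan is to translate the block condition into the quasi-straight condition for $N:=M(\underline{-1})$. By definition $N_{\underline{w}}=M_{\underline{w}-\underline{1}}$, where $\underline{1}=(1,\ldots,1)$. For $\underline{u}\in\mathcal{B}(U)$, put $\underline{u}'=\underline{u}+\underline{1}$, so that $M_{\underline{u}}=N_{\underline{u}'}$. Then $u'_i\geq 1$ for $i\in U$ and $u'_i\leq 0$ for $i\notin U$, hence $\operatorname{supp}_{+}(\underline{u}')=U$ exactly. The shifted block $\mathcal{B}(U)+\underline{1}$ is therefore the set of all $\underline{w}\in\mathbb{Z}^n$ with $\operatorname{supp}_{+}(\underline{w})=U$.

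Given $\underline{u},\underline{v}\in\mathcal{B}(U)$, I would pass through the componentwise maximum. Let $\underline{w}=\max(\underline{u}',\underline{v}')$, taken coordinate by coordinate. For $i\in U$ both coordinates are $\geq 1$, so $w_i\geq 1$. For $i\notin U$ both coordinates are $\leq 0$, so $w_i\leq 0$. Hence $\operatorname{supp}_{+}(\underline{w})=U$. Moreover $\underline{a}:=\underline{w}-\underline{u}'\in\mathbb{N}^n$ and $\underline{b}:=\underline{w}-\underline{v}'\in\mathbb{N}^n$.

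Since $\operatorname{supp}_{+}(\underline{u}'+\underline{a})=\operatorname{supp}_{+}(\underline{u}')=U$, the quasi-straight hypothesis gives $N_{\underline{u}'}\cong N_{\underline{w}}$. By the same reasoning $N_{\underline{v}'}\cong N_{\underline{w}}$. Composing these, $M_{\underline{u}}=N_{\underline{u}'}\cong N_{\underline{w}}\cong N_{\underline{v}'}=M_{\underline{v}}$.

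There is no real obstacle here. The only point that needs care is the choice of the common upper bound $\underline{w}$: the definition of quasi-straight only allows moving up by a vector $\underline{v}\in\mathbb{N}^n$, so one cannot compare $\underline{u}'$ and $\underline{v}'$ directly and must route both through $\underline{w}$. One also has to check that $\underline{w}$ keeps the same positive support, which uses that the coordinates outside $U$ are $\leq 0$ on both sides.
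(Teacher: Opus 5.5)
Your argument is correct and rests on the same observation as the paper's proof: after the shift by $\underline{1}$, the block $\mathcal{B}(U)$ becomes exactly the set of degrees with $\operatorname{supp}_{+}$ equal to $U$, so quasi-straightness of $M(\underline{-1})$ applies. The paper moves one coordinate at a time by unit steps $e_i$ (up for $i\in U$, down for $i\notin U$) and chains the isomorphisms; you instead go to the common upper bound $\max(\underline{u}',\underline{v}')$ in a single step, which is a slightly cleaner use of the full $\underline{v}\in\mathbb{N}^n$ form of the definition.
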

\begin{proof}
Since $M(\underline{-1})$ is quasi-straight, $M(\underline{-1})_{\underline{u}}\cong M(\underline{-1})_{\underline{u}+e_i}$ whenever $\operatorname{supp}_{+}(\underline{u}+e_i)= \operatorname{supp}_{+}(\underline{u})$. This implies that $M_{\underline{u}}\cong M_{\underline{u}+e_i}$ if $\operatorname{supp}_{+}(\underline{u}+e_1+\ldots+e_n)= \operatorname{supp}_{+}(\underline{u}+e_1+\ldots+2e_i+\ldots+e_n)$.

    Let $U=\{r_1,r_2,\ldots,r_p\}\subseteq \mathcal{S}$. 
    Let $\underline{u}=(u_1,\ldots,u_n)$ and $\underline{v}=(v_1,\ldots,v_n)$ be two elements of $\mathcal{B}(U)$. Let $1\leq j\leq p$. By definition, we have $u_{r_j}\geq 0$. Since,
    \begin{align*}
        \operatorname{supp}_{+}(u_1+1,\ldots,1,\ldots,u_n+1)\ &=\operatorname{supp}_{+}(u_1+1,\ldots,2,\ldots,u_n+1)\\&= \operatorname{supp}_{+}(u_1+1,\ldots,3,\ldots,u_n+1)\\& \multicolumn{2}{c}{\vdots}  \\&= \operatorname{supp}_{+}(u_1+1,\ldots,u_{r_j}+1,\ldots,u_n+1)\\& =\operatorname{supp}_{+}(u_1+1,\ldots,u_{r_j}+2,\ldots,u_n+1)\\& \multicolumn{2}{c}{\vdots}
    \end{align*}
    we have 
    \begin{align*}
        M_{(u_1,\ldots,0,\ldots,u_n)}\ &\cong M_{(u_1,\ldots,1,\ldots,u_n)}\\&\cong  M_{(u_1,\ldots,2,\ldots,u_n)}\\& \multicolumn{2}{c}{\vdots}  \\&\cong M_{(u_1,\ldots,u_{r_j},\ldots,u_n)} \\&\cong M_{(u_1,\ldots,u_{r_j}+1,\ldots,u_n)}\\& \multicolumn{2}{c}{\vdots}
    \end{align*}
     Hence,
$M_{(u_1,\ldots,u_{r_j},\ldots,u_n)}\cong M_{(u_1,\ldots,u_{r_j}+s,\ldots,u_n)}$ for all $s\geq - u_{r_j}$. Let $q$ be such that $q\neq r_i$ for any $1\leq i\leq p$. Then a similar argument proves that $M_{(u_1,\ldots,u_q,\ldots,u_n)}\cong M_{(u_1,\ldots,u_{q}-s,\ldots,u_n)}$ for all $s\geq u_q+1$. We note that $v_{r_j}=u_{r_j}+a$ for some $a\geq - u_{r_j}$. Also if $q$ is such that $q\neq r_i$ for any $1\leq i\leq p$ then $v_q=u_q-b$ for some $b\geq u_q+1$. This implies that $M_{\underline{u}}\cong M_{\underline{v}}$.
\end{proof}
The following result examines the behavior of of straight modules in an exact sequence under certain assumptions. More precisely, it provides a criterion for straightness of a  $\mathbb{Z}^n$-graded submodule and $\mathbb{Z}^n$-graded quotients of a straight module.
\begin{proposition}
    \label{U and V are straight}
    Let $K$ be a field of any characteristic and $X$ be a $\mathbb{Z}^n$-graded Eulerian module over $\D(K[X_1,\ldots,X_n])$. Let $0\rightarrow U\rightarrow X\rightarrow V\rightarrow 0$ be an exact sequence of $\mathbb{Z}^n$-graded $\D(K[X_1,\ldots,X_n])$-modules and $X(\underline{-1})$ is straight, then $U(\underline{-1})$ and $V(\underline{-1})$ are both straight modules over $K[X_1,\ldots,X_n]$.
\end{proposition}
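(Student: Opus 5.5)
The plan is to check the two conditions defining a straight module for $U(\underline{-1})$ and $V(\underline{-1})$ directly. The inputs are the straightness of $X(\underline{-1})$, the snake lemma applied degreewise, and the Eulerian identities $E^r_i\cdot y=\binom{w_i}{r}y$. These identities hold on $U$, since it is a graded $\D$-submodule of $X$.

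\textbf{Condition (a).} This is immediate: $U_{\underline{w}}\subseteq X_{\underline{w}}$ and $V_{\underline{w}}$ is a quotient of $X_{\underline{w}}$, so both are finite dimensional.

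\textbf{Reduction of condition (b).} It suffices to treat $\underline{v}=e_i$, because a general $\underline{X}^{\underline{v}}$ factors into such steps without changing $\operatorname{supp}_+$. Translating the condition $\operatorname{supp}_+(\underline{u}+e_i)=\operatorname{supp}_+(\underline{u})$ on $X(\underline{-1})$ back to unshifted degrees $\underline{w}=\underline{u}-\underline{1}$, it says exactly that $w_i\neq -1$. So we must show that $X_i\colon U_{\underline{w}}\to U_{\underline{w}+e_i}$ and $X_i\colon V_{\underline{w}}\to V_{\underline{w}+e_i}$ are bijective whenever $w_i\ne -1$.

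\textbf{Using the snake lemma.} Consider the commutative diagram with rows $0\to U_{\underline{w}}\to X_{\underline{w}}\to V_{\underline{w}}\to 0$ and $0\to U_{\underline{w}+e_i}\to X_{\underline{w}+e_i}\to V_{\underline{w}+e_i}\to 0$, with vertical maps multiplication by $X_i$. The middle vertical map is bijective. Hence the map on $U$ is injective, the map on $V$ is surjective, and the snake lemma gives $\ker(X_i|_V)\cong\operatorname{coker}(X_i|_U)$. So everything reduces to showing that $X_i\colon U_{\underline{w}}\to U_{\underline{w}+e_i}$ is surjective.

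\textbf{Surjectivity on $U$ (the main step).} Take $y\in U_{\underline{w}+e_i}$. Suppose we find $r\geq 1$ with $c:=\binom{w_i+1}{r}\neq 0$ in $K$. The Eulerian identity gives
$$y=c^{-1}E^r_i y=X_i\cdot\bigl(c^{-1}X_i^{r-1}\partial_i^{[r]}y\bigr),$$
and the element in parentheses lies in $U_{\underline{w}}$ because $U$ is a $\D$-submodule. So the whole argument comes down to producing such an $r$, which I expect to be the only real obstacle since it must work in every characteristic. I would split into two cases.
\begin{enumerate}
\item If $w_i\geq 0$, take $r=w_i+1$; then $\binom{w_i+1}{r}=1$.
\item If $w_i\le -2$, write $k=-(w_i+1)\geq 1$, so that $\binom{-k}{r}=(-1)^r\binom{k+r-1}{r}$. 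In characteristic $0$ any $r\geq1$ works. In characteristic $p$, choose $r=p^e$ with $p^e>k-1$. Then $k+p^e-1=p^e+(k-1)$ has base-$p$ digit $1$ in position $e$, so Lucas' theorem gives $\binom{k+p^e-1}{p^e}\equiv 1 \pmod p$.
\end{enumerate}
In either case $c\neq 0$, which completes the proof for both $U(\underline{-1})$ and $V(\underline{-1})$.
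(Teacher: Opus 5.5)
Your proof is correct, but it runs dual to the paper's argument. The paper first proves that $X_i\colon V_{\underline{w}}\to V_{\underline{w}+e_i}$ is bijective for $w_i\neq -1$, in three steps:
\begin{enumerate}
\item surjectivity comes from the middle map;
\item for injectivity it passes to the one-variable slice $V_i$, a $\mathbb{Z}$-graded Eulerian $\D(K[X_i])$-module because $V$ is Eulerian as a quotient of $X$, and notes that the kernel of $X_i$ on $V_i$ coincides with its kernel on $\Gamma_{(X_i)}(V_i)$;
\item it then invokes the Ma--Zhang theorem that $\Gamma_{(X_i)}(V_i)$ is a direct sum of copies of ${}^*E(1)$, on which $X_i$ is bijective except from degree $-1$ to degree $0$.
\end{enumerate}
The snake lemma then transfers bijectivity to $U$.

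You instead kill the cokernel of $X_i$ on $U$ directly. You write $y=X_i\cdot\bigl(c^{-1}X_i^{r-1}\partial_i^{[r]}y\bigr)$, with $r$ chosen so that $c=\binom{w_i+1}{r}$ is a unit (using Lucas' theorem in positive characteristic). The snake lemma then transfers bijectivity to $V$.

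Your route is elementary and self-contained. It avoids the structure theorem for Eulerian modules supported at the maximal ideal, and it uses the Eulerian property only on $U$. It also establishes a general fact: in any $\mathbb{Z}^n$-graded Eulerian module, $X_i\colon M_{\underline{w}}\to M_{\underline{w}+e_i}$ is surjective whenever $w_i\neq -1$. This is the same divisibility mechanism the paper itself uses in Lemma~\ref{Eulerian module is 0} to show that $M/XM$ lives in degree zero.

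The paper's route controls the kernel rather than the cokernel. It fits the ${}^*E$-based viewpoint the paper reuses later, at the cost of depending on a deep external classification. It also leaves condition (a) implicit, which you verify explicitly.
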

\begin{proof}
     Since $X$ is $\mathbb{Z}^n$-graded Eulerian, so is $V$. Consider the following commutative diagram

\begin{equation}\label{equation 1 for proving straight lemma}
\begin{tikzcd}
	0 & U(\underline{-1})_{\underline{u}} & X(\underline{-1})_{\underline{u}} & V(\underline{-1})_{\underline{u}} & 0  \\
	 0 & U(\underline{-1})_{\underline{u}+e_i} & X(\underline{-1})_{\underline{u}+e_i} & V(\underline{-1})_{\underline{u}+e_i}& 0 
	\arrow[ "{X_i}"',from=1-2, to=2-2]
    \arrow[ from=1-1, to=1-2]
    \arrow[ from=2-1, to=2-2]
    \arrow[from=1-3, to=1-4]
    \arrow[from=1-4, to=1-5]
    \arrow[from=2-3, to=2-4]
    \arrow[from=2-4, to=2-5]
    \arrow[ from=1-2, to=1-3]
	\arrow["{X_i}"', from=1-3, to=2-3]
    \arrow["{X_i}"', from=1-4, to=2-4]
	\arrow[ from=2-2, to=2-3]
\end{tikzcd}
\end{equation}
Since $X(\underline{-1})$ is a straight module, the vertical map $X(\underline{-1})_{\underline{u}}\xrightarrow{X_i}X(\underline{-1})_{\underline{u}+e_i}$ of (\ref{equation 1 for proving straight lemma}) is bijective and therefore, the third vertical map $V(\underline{-1})_{\underline{u}} \xrightarrow{X_i} V(\underline{-1})_{\underline{u}+e_i}$ is surjective for all $\underline{u}\in \mathbb{Z}^n$  with $\operatorname{supp}_{+}(\underline{u}+e_i)= \operatorname{supp}_{+}(\underline{u})$. Consequently, $V_{\underline{u}} \xrightarrow{X_i} V_{\underline{u}+e_i}$ is surjective for all $\underline{u}\in \mathbb{Z}^n$  with $\operatorname{supp}_{+}(\underline{u}+e_1+e_2+\ldots +e_n)= \operatorname{supp}_{+}(\underline{u}+e_1+e_2+\ldots +2e_i+\ldots+e_n)$.

For a fixed $(u_1,\ldots,u_{i-1},\hat{u},u_{i+1},\ldots,u_n)\in \mathbb{Z}^{n-1}$, let $V_i=\bigoplus_{u\in \mathbb{Z}}V_{(u_1,\ldots,u_{i-1},u,u_{i+1},\ldots,u_n)}$. Clearly, $V_i$ is $\mathbb{Z}$-graded Eulerian module over $\D(F[X_i])$. Consider the following commutative diagram

\begin{equation}\label{3rd equation of 2nd part}
\begin{tikzcd}
	0 & \Gamma_{(X_i)}(V_i)_{u} & (V_i)_{u} & \frac {(V_i)_{u}}{\Gamma_{(X_i)}(V_i)_{u}} & 0  \\
	0 & \Gamma_{(X_i)}(V_i)_{u+1} & (V_i)_{u+1} & \frac{(V_i)_{u+1}}{\Gamma_{(X_i)}(V_i)_{u+1}} & 0  
	\arrow[from=1-1, to=1-2]
	\arrow[ from=1-2, to=1-3]
    \arrow[ from=2-4, to=2-5]
    \arrow[ "{X_i}"', from=1-4, to=2-4]
	\arrow["{X_i}"', from=1-2, to=2-2]
	\arrow[ from=1-3, to=1-4]
    \arrow[ from=1-4, to=1-5]
	\arrow["{X_i}"', from=1-3, to=2-3]
	\arrow[ from=2-1, to=2-2]
	\arrow[ from=2-2, to=2-3]
	\arrow[ from=2-3, to=2-4]
\end{tikzcd}
\end{equation}
 Since $$\displaystyle\ker\left(\frac {(V_i)_{u}}{\Gamma_{(X_i)}(V_i)_{u}}\xlongrightarrow{X_i}\frac{(V_i)_{u+1}}{\Gamma_{(X_i)}(V_i)_{u+1}}\right)=0,$$ we get $$\ker\left(\Gamma_{(X_i)}(V_i)_{u}\xrightarrow{X_i}\Gamma_{(X_i)}(V_i)_{u+1}\right)=\ker\left((V_i)_{u}\xrightarrow{X_i}(V_i)_{u+1}\right).$$ By Theorem \ref{theorem of ma and zhang direct sum}, $\Gamma_{(X_i)}(V_i)$ is isomorphic to a direct sum (may be infinite) of $^*E(1)$ where $^*E$ is the graded injective of $F$ as $F[X_i]$-module. Hence, $\Gamma_{(X_i)}(V_i)_u\xrightarrow{X_i} \Gamma_{(X_i)}(V_i)_{u+1}$ is an isomorphism if $u\geq 0$ or if $u\leq -2$. Therefore, $\displaystyle(V_i)_{u}\xrightarrow{X_i} (V_i)_{u+1}$  i.e., $\displaystyle V_{(u_1,\ldots,u_{i-1},u,u_{i+1},\ldots,u_n)}\xrightarrow{X_i} V_{(u_1,\ldots,u_{i-1},u+1,u_{i+1},\ldots,u_n)}$ is an isomorphism if $u\geq 0$ \! and $\displaystyle\operatorname{supp}_{+}((u_1,\ldots,u_{i-1},u,u_{i+1},\ldots,u_n)+e_1+e_2+\ldots +e_n)= \operatorname{supp}_{+}((u_1,\ldots,u_{i-1},u,u_{i+1},\ldots,u_n)+e_1+e_2+\ldots+2e_i+\ldots +e_n)$. 

Similarly, we get that if  $u\leq -2$ and $\operatorname{supp}_{+}((u_1,\ldots,u_{i-1},u,u_{i+1},\ldots,u_n)+e_1+e_2+\ldots +e_n)= \operatorname{supp}_{+}((u_1,\ldots,u_{i-1},u,u_{i+1},\ldots,u_n)+e_1+e_2+\ldots+2e_i+\ldots +e_n)$, then $V_{(u_1,\ldots,u_{i-1},u,u_{i+1},\ldots,u_n)}\xrightarrow{X_i}V_{(u_1,\ldots,u_{i-1},u+1,u_{i+1},\ldots,u_n)}$ is an isomorphsm. 

Also if $u=-1$, then we see that $\operatorname{supp}_{+}((u_1,\ldots,u_{i-1},u,u_{i+1},\ldots,u_n)+e_1+e_2+\ldots +e_n)\neq  \operatorname{supp}_{+}((u_1,\ldots,u_{i-1},u,u_{i+1},\ldots,u_n)+e_1+e_2+\ldots+2e_i+\ldots +e_n)$. This implies that $V_{\underline{u}} \xrightarrow{X_i} V_{\underline{u}+e_i}$ is bijective for all $\underline{u}\in \mathbb{Z}^n$  with $\operatorname{supp}_{+}(\underline{u}+e_1+e_2+\ldots +e_n)= \operatorname{supp}_{+}(\underline{u}+e_1+e_2+\ldots +2e_i+\ldots+e_n)$ and therefore $V(\underline{-1})$  is a straight module.

Now from the diagram (\ref{equation 1 for proving straight lemma}) using Snake lemma, we see that  $U(\underline{-1})_{\underline{u}} \xrightarrow{X_i} U(\underline{-1})_{\underline{u}+e_i}$ is bijective for all $\underline{u}\in \mathbb{Z}^n$  with $\operatorname{supp}_{+}(\underline{u}+e_i)= \operatorname{supp}_{+}(\underline{u})$ since the other two vertical maps are bijective for all $\underline{u}\in \mathbb{Z}^n$  with $\operatorname{supp}_{+}(\underline{u}+e_i)= \operatorname{supp}_{+}(\underline{u})$. This implies that $U(\underline{-1})$  is a straight module.
\end{proof}
The above proposition is useful to establish two corollaries below that are important for the subsequent sections and are used to prove our main results. Before that we need to note the following crucial remark.
\begin{remark}\label{remark of multiplication map by pi is linear}
\normalfont
    Let the hypothesis be as in Setup \ref{main setup of our paper}. We have that $R=A[X_1,\ldots,X_n]$ where $(A,\pi A,F)$ is a complete DVR. Let $F= A/\pi A$ and $S=F[X_1,\ldots,X_n]$.
    Then the exact sequence $0\rightarrow R\xrightarrow{\pi}R\rightarrow S\rightarrow 0$ induces the following long exact sequence of local cohomology modules
$$\cdots\rightarrow H^{i-1}_{IS}(S)\rightarrow H^i_I(R)\xrightarrow{\pi} H^i_I(R)\rightarrow H^i_{IS}(S)\rightarrow\cdots.$$
Since, $H^i_I(R)$ is a $\D_A(R)$-module  and $\pi\in A$, the multiplication map $H^i_I(R)\xrightarrow{\pi} H^i_I(R)$ is $\D_A(R)$-linear. Therefore, $\ker\left(H^i_I(R)\xrightarrow{\pi} H^i_I(R)\right)$ and $\operatorname{coker}\left(H^i_I(R)\xrightarrow{\pi} H^i_I(R)\right)$ are  $\D_A(R)$-modules and hence $\D_F(S)$-modules. Consequently, the natural  maps
$H^{i-1}_{IS}(S)\rightarrow \ker\left(H^i_I(R)\xrightarrow{\pi} H^i_I(R)\right) $ and
$\operatorname{coker}\left(H^i_I(R)\xrightarrow{\pi} H^i_I(R)\right)\rightarrow H^i_{IS}(S)$ are morphisms in the category of $\D_A(R)$-modules as well as in the category of $\D_F(S)$-modules.
\end{remark}
We now state and prove the two corollaries. The first one is as follows.
\begin{corollary}\label{kernel and cokernel straight}
    Let the hypothesis be as in Setup \ref{main setup of our paper} and let $S=F[X_1,\ldots,X_n]$. Then, $M/\pi M(\underline{-1})$ and $\ker(M\xrightarrow{\pi}M)(\underline{-1})$ are straight as  $S$-modules.
\end{corollary}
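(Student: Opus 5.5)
The plan is to realise $M/\pi M$ as a $\D_F(S)$-submodule and $\ker(M\xrightarrow{\pi}M)$ as a $\D_F(S)$-quotient of local cohomology modules of $S=F[X_1,\ldots,X_n]$ supported on a monomial ideal. Then Proposition \ref{U and V are straight} applies, together with Theorem \ref{theorem of terai and mustata}.

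\textbf{Reduction to a squarefree monomial ideal of $S$.} Since $I$ is $\mathfrak{C}$-monomial, each generator has the form $a\,\underline{X}^{\underline{w}}$ with $a\in A$, and we may write $a=\pi^k\cdot$unit. Hence $IS$ is generated by monomials, and generators with $k\geq 1$ become $0$. Local cohomology depends only on the radical, so $H^j_{IS}(S)=H^j_{J}(S)$ where $J=\sqrt{IS}$ is a squarefree monomial ideal (possibly $0$ or $S$; these trivial cases are immediate). By Theorem \ref{theorem of terai and mustata}, $H^j_{IS}(S)(\underline{-1})$ is straight for every $j$. By Remark \ref{remark of multigraded eulerian}, $H^j_{IS}(S)$ is a $\mathbb{Z}^n$-graded Eulerian $\D_F(S)$-module.

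\textbf{Obtaining the short exact sequences.} From the long exact sequence in Remark \ref{remark of multiplication map by pi is linear} (indexed appropriately) we get injections
\[
M/\pi M=\operatorname{coker}(M\xrightarrow{\pi}M)\hookrightarrow H^i_{IS}(S)
\]
and surjections
\[
H^{i-1}_{IS}(S)\twoheadrightarrow \ker(M\xrightarrow{\pi}M).
\]
By that remark these maps are morphisms of $\D_F(S)$-modules. They are $\mathbb{Z}^n$-graded of degree $\underline{0}$, because the whole long exact sequence is built from the graded \v{C}ech complex and multiplication by $\pi$ has degree $\underline{0}$. So we obtain exact sequences of $\mathbb{Z}^n$-graded $\D_F(S)$-modules
\[
0\to M/\pi M\to H^i_{IS}(S)\to C\to 0
\]
and
\[
0\to L\to H^{i-1}_{IS}(S)\to \ker(M\xrightarrow{\pi}M)\to 0,
\]
where $C$ is the cokernel and $L$ the kernel of the respective maps.

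\textbf{Applying the proposition.} The middle terms are Eulerian and become straight after shifting by $\underline{-1}$. Proposition \ref{U and V are straight} then gives that $(M/\pi M)(\underline{-1})$, as the submodule term, and $\ker(M\xrightarrow{\pi}M)(\underline{-1})$, as the quotient term, are straight $S$-modules.

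\textbf{Main obstacle.} The delicate point is justifying that the connecting maps are genuinely $\mathbb{Z}^n$-graded $\D_F(S)$-linear, so that the submodule and quotient inherit a graded $\D_F(S)$-structure. This is what Remark \ref{remark of multiplication map by pi is linear} provides. The gradedness follows by computing all modules via \v{C}ech complexes on monomial generators $a\underline{X}^{\underline{w}}$ of $I$ and their images in $S$.

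A second, minor point is that $\pi$ acts by zero on $M/\pi M$ and on $\ker(M\xrightarrow{\pi}M)$. This makes them honest $S$-modules, so that the straightness condition, including finite dimensionality of components, is inherited from the ambient straight modules.
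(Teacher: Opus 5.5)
Your proposal is correct and is essentially the paper's argument. The long exact sequence of Remark~\ref{remark of multiplication map by pi is linear} exhibits $\ker(M\xrightarrow{\pi}M)$ as a $\D_F(S)$-quotient of $H^{i-1}_{IS}(S)$ and $M/\pi M$ as a $\D_F(S)$-submodule of $H^{i}_{IS}(S)$, and Proposition~\ref{U and V are straight} then applies via the Eulerian property and the Terai--Musta\c{t}\v{a} theorem. Your extra remarks make explicit what the paper leaves implicit: passing to the squarefree radical $\sqrt{IS}$, checking that the maps are graded of degree $\underline{0}$, and noting that finite-dimensionality of components is inherited.
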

\begin{proof}
 Let $F=A/\pi A$ and $S=F[X_1,\ldots,X_n]$. 
By Remark \ref{remark of multiplication map by pi is linear}, we have the following exact sequence  in the category of $\D_F(S)$-modules
$$H^{i-1}_{IS}(S)\rightarrow \ker(M\xrightarrow{\pi}M)\rightarrow 0. $$
Since $H^{i-1}_{IS}(S)(\underline{-1})$ is straight by Theorem \ref{theorem of terai and mustata}  and $H^{i-1}_{IS}(S)$ is $\mathbb{Z}^n$-graded Eulerian by Remark \ref{remark of multigraded eulerian}, we get that $\ker(M\xrightarrow{\pi}M)(\underline{-1})$ is straight by Proposition \ref{U and V are straight}.

Again  by Remark \ref{remark of multiplication map by pi is linear}, we have the following exact sequence in the category of $\D_F(S)$-modules $$0\rightarrow \frac{M}{\pi M}\rightarrow H^i_{IS}(S).$$
 Since $H^{i}_{IS}(S)(\underline{-1})$ is straight by Theorem \ref{theorem of terai and mustata}  and $H^{i}_{IS}(S)$ is $\mathbb{Z}^n$-graded Eulerian by Remark \ref{remark of multigraded eulerian}, we get that $M/\pi M(\underline{-1})$ is straight by Proposition \ref{U and V are straight}.
\end{proof}
\begin{corollary}\label{kernel of gamma and bar of M straight}
    Let the hypothesis be as in Setup \ref{main setup of our paper} and let $S=F[X_1,\ldots,X_n]$. Then, the modules $\ker\left(\Gamma_{\pi}(M)\xrightarrow{\pi}\Gamma_{\pi}(M)\right)\!(\underline{-1})$, $\Gamma_{\pi}(M)/\pi\Gamma_{\pi}(M)(\underline{-1})$ and $\overline{M}/\pi\overline{M}(\underline{-1})$ and  are straight as $S$-modules.
\end{corollary}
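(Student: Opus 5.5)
We reduce all three statements to Proposition \ref{U and V are straight} together with Corollary \ref{kernel and cokernel straight}. The idea is to exhibit each module as a $\mathbb{Z}^n$-graded $\D_F(S)$-submodule or quotient of a module $X$ that is $\mathbb{Z}^n$-graded Eulerian over $\D_F(S)$ and has $X(\underline{-1})$ straight.

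First we check that $\Gamma_{\pi}(M)$ is a $\D_A(R)$-submodule of $M$. Since $\pi\in A$ commutes with every element of $\D_A(R)$, an element killed by $\pi^k$ is sent by any differential operator to an element killed by $\pi^k$. Hence $\Gamma_\pi(M)$ and $\overline{M}=M/\Gamma_\pi(M)$ are $\mathbb{Z}^n$-graded $\D_A(R)$-modules, and multiplication by $\pi$ is $\D_A(R)$-linear on both. It follows that all the kernels and cokernels of $\pi$ below are $\D_F(S)$-modules, as in Remark \ref{remark of multiplication map by pi is linear}.

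For the first module, note that $\ker(\Gamma_\pi(M)\xrightarrow{\pi}\Gamma_\pi(M))=\ker(M\xrightarrow{\pi}M)$, because every element killed by $\pi$ already lies in $\Gamma_\pi(M)$. So this case is exactly Corollary \ref{kernel and cokernel straight}.

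For the other two, we use that $\overline{M}$ is $\pi$-torsion free. Applying the snake lemma to multiplication by $\pi$ on $0\to\Gamma_\pi(M)\to M\to\overline{M}\to0$ gives an exact sequence of $\mathbb{Z}^n$-graded $\D_F(S)$-modules
\[
0\to \Gamma_\pi(M)/\pi\Gamma_\pi(M)\to M/\pi M\to \overline{M}/\pi\overline{M}\to 0 .
\]
Here we use $\ker(\overline M\xrightarrow{\pi}\overline M)=0$. The middle term $X=M/\pi M$ embeds $\D_F(S)$-linearly into $H^i_{IS}(S)$, which is $\mathbb{Z}^n$-graded Eulerian by Remark \ref{remark of multigraded eulerian}. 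Submodules of Eulerian modules are Eulerian, so $X$ is Eulerian, and $X(\underline{-1})$ is straight by Corollary \ref{kernel and cokernel straight}. Proposition \ref{U and V are straight} then shows that both $\Gamma_\pi(M)/\pi\Gamma_\pi(M)(\underline{-1})$ and $\overline{M}/\pi\overline{M}(\underline{-1})$ are straight.

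The main obstacle is verifying the hypotheses of Proposition \ref{U and V are straight}, not the straightness argument itself. Three points need care. First, we must confirm that the $\D_F(S)$-structure on $M/\pi M$ (and on the pieces of the snake sequence) is compatible with the grading. Second, $M/\pi M$ must inherit the Eulerian property; this follows from the injection into $H^i_{IS}(S)$ being a graded $\D_F(S)$-morphism, as in Remark \ref{remark of multiplication map by pi is linear}. Third, finite-dimensionality of graded pieces passes to submodules and quotients automatically.
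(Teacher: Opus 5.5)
Your proposal is correct and follows the paper's proof essentially step for step. You identify $\ker(\Gamma_\pi(M)\xrightarrow{\pi}\Gamma_\pi(M))$ with $\ker(M\xrightarrow{\pi}M)$ and invoke Corollary \ref{kernel and cokernel straight}, then apply Proposition \ref{U and V are straight} to the snake-lemma sequence $0\to\Gamma_\pi(M)/\pi\Gamma_\pi(M)\to M/\pi M\to\overline{M}/\pi\overline{M}\to 0$, using that $M/\pi M$ is Eulerian as a $\D_F(S)$-submodule of $H^i_{IS}(S)$; your explicit check that $\Gamma_\pi(M)$ is a $\D_A(R)$-submodule, which makes these maps $\D_F(S)$-linear, is a detail the paper leaves implicit.
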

\begin{proof}
  Consider the following commutative diagram
\begin{equation}\label{equation of gamma M}
\begin{tikzcd}
	0 & \Gamma_\pi(M) & M & \frac{M}{\Gamma_\pi(M)} & 0  \\
	0 & \Gamma_\pi(M) & M & \frac{M}{\Gamma_\pi(M)} & 0
	\arrow[from=1-1, to=1-2]
	\arrow[ from=1-2, to=1-3]
    \arrow[ from=2-4, to=2-5]
    \arrow[ "{\pi}"', from=1-4, to=2-4]
	\arrow["{\pi}"', from=1-2, to=2-2]
	\arrow[ from=1-3, to=1-4]
    \arrow[ from=1-4, to=1-5]
	\arrow["{\pi}"', from=1-3, to=2-3]
	\arrow[ from=2-1, to=2-2]
	\arrow[ from=2-2, to=2-3]
	\arrow[ from=2-3, to=2-4]
\end{tikzcd}
\end{equation}  
Since $\ker\left(M/\Gamma_\pi(M)\xrightarrow{\pi}M/\Gamma_\pi(M)\right)=0$, we get $\ker\left(M\xrightarrow{\pi}M\right)=\ker\left(\Gamma_\pi(M)\xrightarrow{\pi}\Gamma_\pi(M)\right)$. Therefore, $\ker\left(\Gamma_{\pi}(M)\xrightarrow{\pi}\Gamma_{\pi}(M)\right)(\underline{-1})$ is straight by Lemma \ref{kernel and cokernel straight}.

Now the diagram \ref{equation of gamma M} induces the following exact sequence of $\D_F(S)$-modules 
$$0\rightarrow \frac{\Gamma_{\pi}(M)}{\pi\Gamma_{\pi}(M)}\rightarrow \frac{M}{\pi M}\rightarrow \frac{\overline{M}}{\pi \overline{M}}\rightarrow 0.$$
Since $M/\pi M$ is a submodule of $H^i_{IS}(S)$ (see , Remark \ref{remark of multiplication map by pi is linear}) and the later module is $\mathbb{Z}^n$-graded Eulerian, so is  $M/\pi M$. Also from Corollary \ref{kernel and cokernel straight}, we get that $M/\pi M(\underline{-1})$ is straight.  The result now follows from Proposition \ref{U and V are straight}.
\end{proof}
\section{Proof of Theorem A}
In this section, we give a proof of \hyperref[Theorem A]{Theorem A}. We recall the theorem again for the convenience of the reader.
\begin{theorem}\label{proof of theorem a}
    Let the hypothesis be as in Setup \ref{main setup of our paper}. Then, the sets  $\{a(\underline{u})\mid \underline{u}\in \mathbb{Z}^n\}$ and $\{b(\underline{u})\mid \underline{u}\in \mathbb{Z}^n\}$ are constant on $\mathcal{B}(U)$ for each subset $U$ of $\{1, \ldots, n\}$.
\end{theorem}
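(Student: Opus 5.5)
We need to show that $a(\underline{u})$ and $b(\underline{u})$ are constant on blocks. The idea is to read off both numbers from invariants that Corollary \ref{kernel of gamma and bar of M straight} shows are constant on blocks.

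For the first number, write $\overline{M_{\underline{u}}}=A^{a(\underline{u})}\oplus K^{b(\underline{u})}$. Since $\pi K=K$, we get
$$\overline{M_{\underline{u}}}/\pi\overline{M_{\underline{u}}}\cong F^{a(\underline{u})},$$
so $a(\underline{u})=\dim_F(\overline{M}/\pi\overline{M})_{\underline{u}}$. Taking graded components commutes with forming $\overline{M}$ and with reducing mod $\pi$, so $(\overline{M}/\pi\overline{M})_{\underline{u}}=\overline{M_{\underline{u}}}/\pi\overline{M_{\underline{u}}}$. By Corollary \ref{kernel of gamma and bar of M straight}, $\overline{M}/\pi\overline{M}(\underline{-1})$ is straight over $S$, hence quasi-straight. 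Lemma \ref{iso on each block} then gives $(\overline{M}/\pi\overline{M})_{\underline{u}}\cong(\overline{M}/\pi\overline{M})_{\underline{v}}$ for $\underline{u},\underline{v}\in\mathcal{B}(U)$. Hence $a$ is constant on each block.

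For the second number, I use the rank over $K$. Since $\Gamma_\pi(M_{\underline{u}})\otimes_A K=0$, we have
$$a(\underline{u})+b(\underline{u})=\dim_K(M_{\underline{u}}\otimes_A K)=\alpha(\underline{u}).$$
So it suffices to show that $\alpha$ is constant on blocks. Localization at $\pi$ is exact, so
$$M\otimes_A K=M_\pi=H^i_{IR_\pi}(R_\pi)=H^i_{IK[X]}(K[X_1,\ldots,X_n]).$$
Over $K[X_1,\ldots,X_n]$, the ideal $IK[X]$ is generated by monomials, since the coefficients $a$ become units or zero. Thus its radical is a squarefree monomial ideal $J$, and local cohomology only depends on the radical, so this module equals $H^i_J(K[X_1,\ldots,X_n])$. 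By Theorem \ref{theorem of terai and mustata}, its shift by $\underline{-1}$ is straight over $K[X_1,\ldots,X_n]$, hence quasi-straight. Lemma \ref{iso on each block} (applied with base field $K$) shows that $\alpha(\underline{u})$ is constant on blocks. Therefore $b=\alpha-a$ is constant on blocks as well.

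The main point to check is the identification of the graded pieces: that $(M\otimes_A K)_{\underline{u}}=M_{\underline{u}}\otimes_A K$ (clear, since $A$ has degree $\underline{0}$), and that the radical argument preserves the $\mathbb{Z}^n$-grading. Both are routine. As a possible alternative for $b$, one could instead try to work with the divisible part via $\ker\pi$, but the rank argument above avoids that.
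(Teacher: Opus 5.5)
Your proposal is correct and follows the same route as the paper. It reads off $a(\underline{u})$ from $\overline{M}/\pi\overline{M}$ via Corollary \ref{kernel of gamma and bar of M straight} and Lemma \ref{iso on each block}, and $a(\underline{u})+b(\underline{u})$ from $M\otimes_A K=H^i_{IK[X_1,\ldots,X_n]}(K[X_1,\ldots,X_n])$ via Theorem \ref{theorem of terai and mustata}; your remark about first passing to the squarefree radical fills in a step the paper leaves implicit.
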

\begin{proof}
Let  $U$ be a subset of $\{1, \ldots, n\}$. Consider the exact sequence of $A$-modules
\begin{equation*} 
    0\rightarrow \Gamma_\pi(M_{\underline{u}})\rightarrow M_{\underline{u}}\rightarrow \frac{M_{\underline{u}}}{\Gamma_\pi(M_{\underline{u}})}\rightarrow 0 .
\end{equation*}

Tensoring the above sequence with $K$, we get the following exact sequence  
$$0\rightarrow \Gamma_\pi(M_{\underline{u}})\otimes_A K\rightarrow M_{\underline{u}}\otimes_A K\rightarrow \frac{M_{\underline{u}}}{\Gamma_\pi(M_{\underline{u}})}\otimes_A K\rightarrow 0 .$$
Since $\Gamma_\pi(M_{\underline{u}})\otimes_A K=0$, we get that $M_{\underline{u}}\otimes_A K\cong M_{\underline{u}}/\Gamma_\pi(M_{\underline{u}})\otimes_A K\cong  K^{a(\underline{u})+b(\underline{u})}.$ Note that $M\otimes_A K=H^i_I(K[X_1,\ldots,X_n])$. By Theorem \ref{theorem of terai and mustata}, $H^i_I(K[X_1,\ldots,X_n])(\underline{-1})$ is straight. Consequently, by Lemma \ref{iso on each block},   the set $\{a(\underline{u})+b(\underline{u})\mid \underline{u}\in \mathbb{Z}^n\}$ is constant  on $\mathcal{B}(U)$. 

Since, 
$$\overline{M_{\underline{u}}}= \frac{M_{\underline{u}}}{\Gamma_{\pi}(M_{\underline{u}})}=A^{a(\underline{u})}\oplus K^{b(\underline{u})},$$
we get $$\frac{\overline{M_{\underline{u}}}}{\pi \overline{M_{\underline{u}}}}=(A/\pi)^{a(\underline{u})}.$$
From Lemma \ref{kernel of gamma and bar of M straight}, $\overline{M}/\pi\overline{M}(\underline{-1})$ is straight. Therefore by Lemma \ref{iso on each block}, the set $\{a(\underline{u})\mid \underline{u}\in \mathbb{Z}^n\}$ is constant  on $\mathcal{B}(U)$. This implies that $\{b(\underline{u})\mid \underline{u}\in \mathbb{Z}^n\}$ is constant  on $\mathcal{B}(U)$ because $\{a(\underline{u})+b(\underline{u})\mid \underline{u}\in \mathbb{Z}^n\}$ is constant  on $\mathcal{B}(U)$.

\end{proof}
\section{Proof of  Theorem B}
In this section, we give a proof of \hyperref[Theorem B]{Theorem B}, which studies the behaviour of the sets  $\{l(\underline{u})\mid \underline{u}\in \mathbb{Z}^n\}$ and $\{t(\underline{u})\mid \underline{u}\in \mathbb{Z}^n\}$ on $\mathcal{B}(U)$ for each subset $U$ of $\{1, \ldots, n\}$. 
\begin{theorem}\label{proof of theorem b}
    Let the hypothesis be as in Setup \ref{main setup of our paper}. Then, the sets  $\{l(\underline{u})\mid \underline{u}\in \mathbb{Z}^n\}$ and $\{t(\underline{u})\mid \underline{u}\in \mathbb{Z}^n\}$ are constant on $\mathcal{B}(U)$ for each subset $U$ of $\{1, \ldots, n\}$.
\end{theorem}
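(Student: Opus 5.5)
We detect $l(\underline{u})$ and $t(\underline{u})$ through two $F$-vector spaces attached to $\Gamma_\pi(M_{\underline{u}})$, each of which is a graded component of one of the straight modules in Corollary \ref{kernel of gamma and bar of M straight}. Lemma \ref{iso on each block} then makes these dimensions constant on each block $\mathcal{B}(U)$. Throughout we use $\Gamma_\pi(M)_{\underline{u}}=\Gamma_\pi(M_{\underline{u}})$, which holds because $\pi$ has degree $\underline{0}$.

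First we compute the $\pi$-kernel and $\pi$-cokernel of each summand in the decomposition of Setup \ref{main setup of our paper}. For $E_A(A/\pi A)$, multiplication by $\pi$ is surjective, since $E_A(A/\pi A)$ is divisible over the DVR $A$. Its kernel is the socle $A/\pi A = F$. For $A/\pi^{\beta}A$ with $\beta\geq 1$, both the kernel and the cokernel of $\pi$ are one-dimensional over $F$. Since kernels and cokernels commute with direct sums, we obtain
\begin{align*}
\dim_F \ker\bigl(\Gamma_\pi(M_{\underline{u}})\xrightarrow{\pi}\Gamma_\pi(M_{\underline{u}})\bigr)&=l(\underline{u})+t(\underline{u}),\\
\dim_F \Gamma_\pi(M_{\underline{u}})/\pi\Gamma_\pi(M_{\underline{u}})&=t(\underline{u}).
\end{align*}
Here we may assume $\beta_j(\underline{u})\geq 1$, discarding any zero summands.

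Next, by Corollary \ref{kernel of gamma and bar of M straight}, the modules $\ker(\Gamma_\pi(M)\xrightarrow{\pi}\Gamma_\pi(M))(\underline{-1})$ and $\Gamma_\pi(M)/\pi\Gamma_\pi(M)(\underline{-1})$ are straight $S$-modules. Each is therefore quasi-straight, since $F$ is a field. Their $\underline{u}$-th components, taken before the shift, are exactly the two $F$-vector spaces above. Lemma \ref{iso on each block} gives isomorphisms of these components for $\underline{u},\underline{v}\in\mathcal{B}(U)$. Because the dimensions are finite by part (a) of straightness, the isomorphisms force equality of dimensions. Hence $t(\underline{u})$ is constant on $\mathcal{B}(U)$, and so is $l(\underline{u})+t(\underline{u})$. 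Subtracting, $l(\underline{u})$ is constant on $\mathcal{B}(U)$.

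The main point needing care is identifying the graded components of these kernel and quotient modules with the kernel and cokernel of $\pi$ on the individual component $\Gamma_\pi(M_{\underline{u}})$. This works because all the constructions involved ($\Gamma_\pi$, $\ker\pi$, $\operatorname{coker}\pi$) are compatible with the $\mathbb{Z}^n$-grading, again since $\pi$ has degree $\underline{0}$. Once that identification is in place, the rest is the bookkeeping above.
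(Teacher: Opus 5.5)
Your proposal is correct and follows essentially the paper's proof: it reads off $\dim_F\ker(\pi)=l(\underline{u})+t(\underline{u})$ and $\dim_F\operatorname{coker}(\pi)=t(\underline{u})$ on $\Gamma_\pi(M_{\underline{u}})$, then applies straightness and Lemma~\ref{iso on each block}. The only cosmetic difference is that you take the straightness of $\ker(\Gamma_\pi(M)\xrightarrow{\pi}\Gamma_\pi(M))(\underline{-1})$ directly from Corollary~\ref{kernel of gamma and bar of M straight}, whereas the paper passes through $\ker(M\xrightarrow{\pi}M)$ via the snake lemma and Corollary~\ref{kernel and cokernel straight}, which is the same identification used to prove Corollary~\ref{kernel of gamma and bar of M straight}; your exclusion of summands with $\beta_j(\underline{u})=0$ is in fact slightly more careful than the paper's ``for any $r\geq 0$''.
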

\begin{proof}
Let $F= A/\pi A$ and $S=F[X_1,\ldots,X_n]$. 
Consider the following commutative diagram
\begin{equation}\label{first equation for part b}
\begin{tikzcd}
	0 & \Gamma_\pi(M_{\underline{u}}) & M_{\underline{u}} & \frac{M_{\underline{u}}}{\Gamma_\pi(M_{\underline{u}})} & 0  \\
	0 & \Gamma_\pi(M_{\underline{u}}) & M_{\underline{u}} & \frac{M_{\underline{u}}}{\Gamma_\pi(M_{\underline{u}})} & 0 
	\arrow[from=1-1, to=1-2]
	\arrow[ from=1-2, to=1-3]
    \arrow[ from=2-4, to=2-5]
    \arrow[ "{\pi}"', from=1-4, to=2-4]
	\arrow["{\pi}"', from=1-2, to=2-2]
	\arrow[ from=1-3, to=1-4]
    \arrow[ from=1-4, to=1-5]
	\arrow["{\pi}"', from=1-3, to=2-3]
	\arrow[ from=2-1, to=2-2]
	\arrow[ from=2-2, to=2-3]
	\arrow[ from=2-3, to=2-4]
\end{tikzcd}
\end{equation}
Note that
$$\Gamma_{\pi}(M_{\underline{u}})=E_A\left(\frac{A}{\pi A}\right)^{l({\underline{u}})} \oplus\left(\bigoplus_{j=1}^{t({\underline{u}})}\frac{A}{\pi^{\beta_j({\underline{u}})}A}\right).$$
Since $(A,\pi A)$ is a DVR, the kernel of the multiplication map $E(A/\pi A)\xrightarrow{\pi} E(A/\pi A)$ is $A/\pi A=F$. Also for any $r\geq 0$, the kernel of the map $A/\pi^r A\xrightarrow{\pi} A/\pi^r A$ is $\pi^{r-1}A/\pi^rA\cong A/\pi A=F$. Hence,  $\ker\left(\Gamma_{\pi}(M_{\underline{u}})\xrightarrow{\pi} \Gamma_{\pi}(M_{\underline{u}})\right)$ is $F^{l(\underline{u})+t(\underline{u})}$. It is easy to see that $\ker\left(M_{\underline{u}}/\Gamma_\pi(M_{\underline{u}})\xrightarrow{\pi}M_{\underline{u}}/\Gamma_\pi(M_{\underline{u}})\right)=0$ since $M_{\underline{u}}/\Gamma_\pi(M_{\underline{u}})= A^{a(\underline{u})}\oplus K^{b(\underline{u})}.$ Therefore, Snake lemma applied to (\ref{first equation for part b}) gives $\ker(M_{\underline{u}}\xrightarrow{\pi}M_{\underline{u}})=F^{l(\underline{u})+t(\underline{u})}$.
Since $\ker(M\xrightarrow{\pi}M)(\underline{-1})$ is straight by Lemma \ref{kernel and cokernel straight}, the set  $\{l(\underline{u})+t(\underline{u})\mid \underline{u}\in \mathbb{Z}^n\}$ is constant on $\mathcal{B}(U)$ by Lemma \ref{iso on each block}.

Now $$\Gamma_{\pi}(M_{\underline{u}})=E_A\left(\frac{A}{\pi A}\right)^{l({\underline{u}})} \oplus\left(\bigoplus_{j=1}^{t({\underline{u}})}\frac{A}{\pi^{\beta_j({\underline{u}})}A}\right)$$ implies 
$$\pi\Gamma_{\pi}(M_{\underline{u}})=E_A\left(\frac{A}{\pi A}\right)^{l({\underline{u}})} \oplus\left(\bigoplus_{j=1}^{t({\underline{u}})}\frac{\pi A+\pi^{\beta_j({\underline{u}})}A}{\pi^{\beta_j({\underline{u}})}A}\right)$$
since $\displaystyle E_A\left(A/\pi A\right)\xrightarrow{\pi}E_A\left(A/\pi A\right)$ is surjective.
Therefore, 
$$\frac{\Gamma_{\pi}(M_{\underline{u}})}{\pi \Gamma_{\pi}(M_{\underline{u}})}=\bigoplus_{j=1}^{t({\underline{u}})}\left(\frac{A}{\pi A}\right)=\left(\frac{A}{\pi A}\right)^{t({\underline{u}})}.$$
 Since $\displaystyle\Gamma_{\pi}(M)/\pi\Gamma_{\pi}(M)(\underline{-1})$ is straight by Lemma \ref{kernel of gamma and bar of M straight}, Lemma \ref{iso on each block} implies that $\{t(\underline{u})\mid \underline{u}\in \mathbb{Z}^n\}$ is constant on $\mathcal{B}(U)$. Again since $\{l(\underline{u})+t(\underline{u})\mid \underline{u}\in \mathbb{Z}^n\}$ is constant on $\mathcal{B}(U)$, we get that $\{l(\underline{u})\mid \underline{u}\in \mathbb{Z}^n\}$ is constant on $\mathcal{B}(U)$.
\end{proof}
\section{Proof of Theorem C and Theorem D}
Before proceeding to the proof \hyperref[Theorem D]{Theorem D}, we first establish some preparatory results. In the sequel, we begin with the following lemma.
\begin{lemma}\label{Eulerian module is 0}
Let $M$ be a $\mathbb{Z}$-graded Eulerian graded $\D(F[X])$-module. Let $i\geq 1$ such that $M_j=0$ for $|j|\leq i$. Then, $M=0$.
\end{lemma}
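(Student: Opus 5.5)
We will exploit the Eulerian identities $E_r\cdot m=\binom{\deg m}{r}m$, where $E_r=X^r\partial^{[r]}$ in one variable. Fix a homogeneous element $m\in M_j$; we must show $m=0$ for every $j$ with $|j|>i$.

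\emph{Large positive degrees.} Let $j>i$ and $m\in M_j$. The element $\partial^{[j]}m$ lies in $M_0$, which is zero by hypothesis (note $0\le i$). Hence
$$E_j\cdot m=X^j\partial^{[j]}m=0.$$
On the other hand, the Eulerian property gives $E_j\cdot m=\binom{j}{j}m=m$. Thus $m=0$. This handles all $j\ge 1$ with $|j|>i$, in any characteristic, because $\binom{j}{j}=1$.

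\emph{Large negative degrees.} Let $j=-k$ with $k>i\geq1$ and $m\in M_{-k}$. Consider $Xm\in M_{-k+1}$ and split into two cases.
\begin{itemize}
\item If $-k+1<-i$, we may argue by descending induction from the boundary. Assume $M_{j'}=0$ for $-k<j'\le -i-1$; then $Xm\in M_{-k+1}$ vanishes by the inductive hypothesis (or by the hypothesis $M_{j'}=0$ for $|j'|\le i$ when $-k+1\ge -i$).
\item If $-k+1=-i$, then $Xm\in M_{-i}=0$ directly.
\end{itemize}
So by induction on $k$, starting at $k=i+1$, we have $Xm=0$ in every case.

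It remains to convert $Xm=0$ into $m=0$. Apply $E_1=X\partial^{[1]}$. Since $E_1\cdot m=(-k)m$, we get
$$(-k)m=X\partial m.$$
The step from here is the crux: we cannot divide by $X$. Instead we use the operator identity $\partial^{[1]}X=X\partial^{[1]}+1$, so
$$0=\partial(Xm)=X\partial m+m=(-k)m+m=(1-k)m.$$
This only yields $m=0$ when $1-k$ is a unit, which fails if $p\mid k-1$, so we need higher operators. More generally,
$$\partial^{[r]}(X^r m)=\sum_{s}\binom{r}{s}X^{r-s}\partial^{[r-s]}m=\sum_s\binom{r}{s}\binom{-k}{r-s}m=\binom{r-k}{r}m$$
by Vandermonde, using $X^{r-s}\partial^{[r-s]}=E_{r-s}$. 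Since $X^r m=0$ for $r\ge1$, this gives $\binom{r-k}{r}m=0$ for all $r\ge1$.

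Now take $r=k-1\ge i\ge 1$. Then
$$\binom{r-k}{r}=\binom{-1}{k-1}=(-1)^{k-1},$$
a unit, so $m=0$.

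Actually this argument only needs $X^{k-1}m=0$. That holds because $X^{k-1}m\in M_{-1}$ and $1\le i$, so $M_{-1}=0$ by hypothesis. The induction on $k$ is therefore unnecessary.

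The same trick makes the positive case symmetric, and the main point to verify carefully is the Vandermonde-type commutation formula $\partial^{[r]}X^r=\sum_s\binom{r}{s}X^{r-s}\partial^{[r-s]}$ in $\D(F[X])$. This is standard, via the Leibniz rule for divided-power derivations.
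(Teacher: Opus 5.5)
Your proof is correct, and it takes a genuinely different, more elementary route than the paper's. The paper first invokes the Ma--Zhang structure theorem (Theorem~\ref{theorem of ma and zhang direct sum}). If $\Gamma_{(X)}(M)\neq 0$, it would be a direct sum of copies of ${}^*E(1)$ and hence nonzero in degree $-1$. So $X$ is a nonzerodivisor on $M$, and then $X^{-u}m\in M_0=0$ kills every $m\in M_u$ with $u\le 0$. For positive degrees the paper writes $\deg m=p^st$ with $p\nmid t$ and uses $E_{p^s}m=\binom{p^st}{p^s}m$ with $\binom{p^st}{p^s}\not\equiv 0 \pmod p$ to see that $M/XM$ is concentrated in degree $0$. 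Together with regularity this gives $M_{n-1}\cong M_n$ via $X$ for $n\neq 0$, so $M_u\cong M_1=0$ for $u\geq 1$.

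You replace both halves by two direct operator identities, valid in any characteristic:
\begin{itemize}
\item $m=E_jm=X^j\partial^{[j]}m$ with $\partial^{[j]}m\in M_0$, for $m\in M_j$ and $j\geq 1$;
\item $\partial^{[k-1]}X^{k-1}m=\binom{-1}{k-1}m=(-1)^{k-1}m$ for $m\in M_{-k}$, which follows from the divided-power Leibniz rule $\partial^{[r]}X^r=\sum_s\binom{r}{s}X^{r-s}\partial^{[r-s]}$ and Chu--Vandermonde.
\end{itemize}
What each route buys:
\begin{itemize}
\item Yours is self-contained and needs neither the Ma--Zhang theorem, nor the Lucas-type congruence, nor $\operatorname{char}F=p$.
\item Yours also shows the hypothesis can be weakened to $M_0=M_{-1}=0$, whereas the paper uses $M_{-1}=M_0=M_1=0$.
\item The paper's route reuses the torsion/torsion-free splitting via Ma--Zhang that it already employs in Proposition~\ref{U and V are straight}.
\end{itemize}

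When you write it up, drop the induction on $k$ and the failed $r=1$ attempt. As you observe at the end, the only input needed in negative degree $-k$ (with $k\geq 2$) is $X^{k-1}m\in M_{-1}=0$.
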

\begin{proof}
If $\Gamma_{(X)}(M)\neq 0$, then  by Theorem \ref{theorem of ma and zhang direct sum}, $\Gamma_{(X)}(M)$ is a direct sum (may be infinite) of $^*E(1)$ where $^*E$ is the graded injective of $F$ as $F[X]$-module.  Therefore, $\Gamma_{(X)}(M)_u\neq 0$ and hence $M_u\neq 0$ for $u\leq -1$ which is a contradiction. This implies that $\Gamma_{(X)}(M)= 0$ and hence $X$ is $M$-regular. Let $m\in M_u$ where $u\leq 0$. Choose a sufficiently large value of $r$ such that $X^rm\in M_0=\{0\}$. Consequently, $m=0$. This implies $M$ is zero in the negative axis.

We claim that $M/XM$ is concentrated in degree zero. The reason for this is the following:

Let $m\in M$ be homogeneous of $\deg(m)>  0$. Then, $\deg(m)=p^st$ such that $p$ does not divide $t$. Since $M$ is  Eulerian  $\D(F[X])$-module, we have $$E_{p^s}m=\binom{p^st}{p^s}m$$ where $\displaystyle E_{p^s}=X^{p^s}\frac{1}{p^s!}\frac{\partial^{p^s}}{\partial X^{p^s}}$. Since $p$ does not divide $\displaystyle\binom{p^st}{p^s}$, we get that $m\in XM$ and the claim follows.

Consider the exact sequence $$0\rightarrow M_{n-1}\xrightarrow{X} M_n\rightarrow \overline{M}_n\rightarrow0.$$
Since $\overline{M}$ is concentrated in degree zero, $M_{n-1}\cong M_n$ for $n\neq 0$. Now for $u\geq 1$, $M_u\cong M_1=\{0\}$. This proves our Lemma.
\end{proof}
As an immediate corollary of the above lemma, we obtain the following.
\begin{corollary}\label{eulerian module 0 in Z^n}
    Let $M$ be a $\mathbb{Z}^n$-graded Eulerian $\D(F[X_1,
    \ldots,X_n])$-module such that $M_{\underline{u}}=0$ for all $\underline{u}$ in $[-1,1]^n\cap \mathbb{Z}^n$. Then, $M=0$.
\end{corollary}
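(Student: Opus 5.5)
Corollary \ref{eulerian module 0 in Z^n} reduces to Lemma \ref{Eulerian module is 0} by restricting to one coordinate at a time. We show by induction on $k\in\{0,1,\ldots,n\}$ that $M_{\underline{u}}=0$ whenever $u_j\in\{-1,0,1\}$ for all $j>k$ (with $u_1,\ldots,u_k$ arbitrary). The case $k=0$ is the hypothesis, and the case $k=n$ gives $M=0$.

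For the inductive step, assume the claim for $k-1$. Fix integers $u_1,\ldots,u_{k-1}$ and $u_{k+1},\ldots,u_n\in\{-1,0,1\}$, and form the $\mathbb{Z}$-graded module
$$M' \;=\;\bigoplus_{u\in\mathbb{Z}} M_{(u_1,\ldots,u_{k-1},u,u_{k+1},\ldots,u_n)}.$$
This is a module over $\D(F[X_k])$, since $X_k$ and the $\partial_k^{[r]}$ shift only the $k$-th coordinate of the degree. It is Eulerian because of the identity $X_k^r\partial_k^{[r]}\,y=\binom{u}{r}y$ for $y$ of $k$-th degree $u$, which is exactly the $\mathbb{Z}^n$-graded Eulerian condition for the index $i=k$. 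The same observation was already used in the proof of Proposition \ref{U and V are straight} for the modules $V_i$.

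By the inductive hypothesis, $M'_u=0$ for $u\in\{-1,0,1\}$, which is the condition $|u|\le 1$. Lemma \ref{Eulerian module is 0} with $i=1$ then gives $M'=0$, which proves the claim for $k$.

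The only point needing care is verifying that $M'$ is a $\D(F[X_k])$-module and Eulerian. Here $\D(F[X_k])$ is generated by $X_k$ and the $\partial_k^{[r]}$, all of which act inside $M'$ because they preserve the other coordinates of the degree. The Eulerian identity is then inherited directly from the $\mathbb{Z}^n$-graded definition.
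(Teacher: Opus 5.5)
Your proof is correct and is essentially the paper's: both reduce to Lemma \ref{Eulerian module is 0} applied to one-variable slices, which are $\mathbb{Z}$-graded Eulerian $\D(F[X_k])$-modules by the $i=k$ case of the $\mathbb{Z}^n$-graded Eulerian condition. The only difference is bookkeeping. The paper inducts on $n$ and applies the $(n-1)$-variable statement to hyperplane slices, while you free one coordinate at a time, which is a little cleaner and uses only the one-variable lemma.
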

\begin{proof}
We prove the result by induction on $n$. If $n=1$, the result follows from Lemma \ref{Eulerian module is 0}. Let $n=2$. Consider the $\mathbb{Z} $-graded Eulerian $\D(F[X_2])$-module $N=\bigoplus_{i\in \mathbb{Z}} M_{(1,i)}$. Since $N_1=N_0=N_{-1}=0$, by Lemma \ref{Eulerian module is 0}, $N=0$. Similarly, the modules  $\bigoplus_{i\in \mathbb{Z}} M_{(-1,i)}$ and $\bigoplus_{i\in \mathbb{Z}} M_{(0,i)}$ are zero. Again replacing  $X_2$ by $X_1$, we see that the modules $\bigoplus_{i\in \mathbb{Z}} M_{(i,-1)}$, $\bigoplus_{i\in \mathbb{Z}} M_{(i,0)}$ and $\bigoplus_{i\in \mathbb{Z}} M_{(i,1)} $ are zero. Therefore, we see that $M=0$ on the lines $X_1=\pm 1,0$ and $X_2=\pm 1,0$. Now we take any arbitrary point say $(a,b)\in \mathbb{Z}^2$. Consider $N'=M_{(a,i)}$. Since $N'$ is Eulerian as $D(F[X_2])$-module and $N'_1=N'_0=N'_{-1}=0$, the Lemma \ref{Eulerian module is 0} implies $N'=0$. Therefore, $M_{(a,b)}=0$ and hence $M=0$.

Assume the result for $n-1$ and we prove it for $n$. The proof is similar to $n=2$ case. For $a\in \{-1,0,1\}$, consider $N^a=\displaystyle \bigoplus_{(a_1,\dots,a_{n-1}) \in \mathbb{Z}^{\,n-1}} M_{(a_1,\ldots,a_{n-1},a)}$. Since $N^a$ is zero on $[-1,1]^{n-1}\cap \mathbb{Z}^{n-1}$ and is $\mathbb{Z}^{n-1}$-graded Eulerian over $F[X_1,\ldots,X_{n-1}]$, the induction hypothesis implies $N^a=0$ on $\mathbb{Z}^{n-1}$. Therefore, we have shown that $M$ is zero on the hyperplanes $X_n=0,\pm 1$. Now take any arbitrary point say $(r_1,\ldots,r_n)\in \mathbb{Z}^n$. Let $T=\displaystyle \bigoplus_{(a_2,\dots,a_{n}) \in \mathbb{Z}^{\,n-1}} M_{(r_1,a_2,\ldots,a_{n})}$. Clearly, $T$ is zero on $[-1,1]^{n-1}\cap \mathbb{Z}^{n-1}$ and $\mathbb{Z}^{n-1}$-graded Eulerian $\D(F[X_2,\ldots,X_n])$-module. By induction hypothesis $T=0$. This completes the proof of the corollary.
\end{proof}
 Since $[-1,1]^n\cap \mathbb{Z}^n$ is finite, we can define a finite number as $m=\max\{\beta_j({\underline{u}})\mid 1\leq j\leq t({\underline{u}})\  \text{where} \ \underline{u}\in [-1,1]^n\cap\  \mathbb{Z}^n\}$. We prove that the set $\{\beta_j({\underline{u}})\mid 1\leq j\leq t({\underline{u}})\  \text{where} \ \underline{u}\in  \mathbb{Z}^n\}$ is bounded above by $m$. 
 \begin{theorem}\label{proof of proposition c}
     Let the hypothesis be as in Setup \ref{main setup of our paper} and $m$ be as in the last paragraph. Then, the set $\{\beta_j({\underline{u}})\mid 1\leq j\leq t({\underline{u}})\  \text{where} \ \underline{u}\in  \mathbb{Z}^n\}$ is finite. In fact,  $\beta_j({\underline{u}})\leq m$ for all $1\leq j\leq t({\underline{u}})$ where  $\underline{u}\in  \mathbb{Z}^n$.
 \end{theorem}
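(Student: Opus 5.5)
Consider the $\mathbb{Z}^n$-graded $S$-module $\pi^m\Gamma_\pi(M)$, or rather the quotient $Q:=\pi^m\Gamma_\pi(M)/\pi^{m+1}\Gamma_\pi(M)$ (the degreewise images). We want $Q$ to detect cyclic summands $A/\pi^{\beta}A$ with $\beta>m$. In degree $\underline{u}$, using the decomposition of $\Gamma_\pi(M_{\underline{u}})$ from Setup \ref{main setup of our paper}, multiplication by $\pi^m$ kills every $A/\pi^{\beta_j}A$ with $\beta_j\le m$ and maps $E_A(A/\pi A)$ onto itself. Hence
$$\pi^m\Gamma_\pi(M_{\underline{u}})\cong E_A(A/\pi A)^{l(\underline{u})}\oplus\bigoplus_{\beta_j(\underline{u})>m}A/\pi^{\beta_j(\underline{u})-m}A .$$
Therefore $Q_{\underline{u}}\cong F^{\#\{j:\beta_j(\underline{u})>m\}}$. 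So $Q_{\underline{u}}=0$ exactly when all $\beta_j(\underline{u})\le m$. By the definition of $m$, $Q$ vanishes on $[-1,1]^n\cap\mathbb{Z}^n$.

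The plan is to show that $Q$ is a $\mathbb{Z}^n$-graded Eulerian $\D_F(S)$-module and then apply Corollary \ref{eulerian module 0 in Z^n}. That corollary gives $Q=0$, which forces $\beta_j(\underline{u})\le m$ for all $\underline{u}$ and all $j$. Finiteness of the set of $\beta_j$ then follows at once, since it is contained in $\{1,\dots,m\}$.

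The key step is the $\D$-structure. As in Remark \ref{remark of multiplication map by pi is linear}, $M=H^i_I(R)$ is a $\D_A(R)$-module and $\pi$ acts $\D_A(R)$-linearly. So $\Gamma_\pi(M)$ is a $\D_A(R)$-submodule, being a union of kernels of $\pi^k$. The same holds for $\pi^m\Gamma_\pi(M)$ and $\pi^{m+1}\Gamma_\pi(M)$ as images of $\D$-linear maps, so $Q$ is a $\D_A(R)$-module killed by $\pi$, hence a $\D_F(S)$-module; here $\D_A(R)\otimes_A F\to \D_F(S)$ is surjective because the divided powers $\partial_i^{[j]}$ are defined over $\mathbb{Z}$.

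For the Eulerian property, I would exhibit $Q$ as a graded $\D_F(S)$-subquotient of an Eulerian module, since subquotients of Eulerian modules are Eulerian (noted after Proposition \ref{localization is eulerian}). The cleanest route is through $M/\pi M$. First, $\pi^m\Gamma_\pi(M)\cap \pi M\supseteq \pi^{m+1}\Gamma_\pi(M)$. Next, $Q$ surjects onto the image of $\pi^m\Gamma_\pi(M)$ in $M/\pi M$, but this surjection may fail to be injective. So I prefer the alternative: multiplication by $\pi^{m}$ followed by reduction induces a $\D$-linear surjection $\Gamma_\pi(M)/\pi\Gamma_\pi(M)\to Q$. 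Since $\Gamma_\pi(M)/\pi\Gamma_\pi(M)$ embeds in $M/\pi M\subseteq H^i_{IS}(S)$ (Corollary \ref{kernel of gamma and bar of M straight} and Remark \ref{remark of multiplication map by pi is linear}), it is Eulerian by Remark \ref{remark of multigraded eulerian}. Hence $Q$ is Eulerian as a graded quotient.

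The main obstacle is verifying carefully that the Eulerian operators $E^r_i=X_i^r\partial_i^{[r]}$ on $Q$ agree with those induced from $H^i_{IS}(S)$, i.e.\ that all maps used are compatible $\D_F(S)$-morphisms preserving the $\mathbb{Z}^n$-grading. Multiplication by $\pi^m$ has degree $\underline 0$ and commutes with $\D_A(R)$, so this should go through.
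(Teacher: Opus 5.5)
Your proposal is correct and follows essentially the same route as the paper. The paper also sets $L=\Gamma_\pi(M)$ and shows $\pi^mL/\pi^{m+1}L$ is Eulerian, as a $\D_F(S)$-quotient (via multiplication by $\pi^m$) of $L/\pi L\subseteq M/\pi M\subseteq H^i_{IS}(S)$. It then notes that this module vanishes on $[-1,1]^n\cap\mathbb{Z}^n$ and applies Corollary \ref{eulerian module 0 in Z^n}. The only difference is the last step: you compute $Q_{\underline{u}}\cong F^{\#\{j:\beta_j(\underline{u})>m\}}$ directly, whereas the paper gets $\beta_j(\underline{u})\le m$ from $\pi^m(A/\pi^{\beta_j}A)=\pi^{m+1}(A/\pi^{\beta_j}A)$ via Nakayama's lemma.
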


 \begin{proof}
We denote $H^i_I(R)$ by $M$, $\Gamma_{\pi}(H^i_I(R))$ by $L$ and $M/\Gamma_\pi(M)$ by $\overline{M}$. Let $S=F[X_1,\ldots,X_n]$ where $F=A/\pi A$. Then,
 $$L_{\underline{u}}=E_A\left(\frac{A}{\pi A}\right)^{l({\underline{u}})} \oplus\left(\bigoplus_{j=1}^{t({\underline{u}})}\frac{A}{\pi^{\beta_j({\underline{u}})}A}\right).$$

 By Remark \ref{remark of multiplication map by pi is linear}, we have the following exact sequence in the category of $\D_F(S)$-modules $$0\rightarrow \frac{M}{\pi M}\rightarrow H^i_{IS}(S).$$
 Since $M/\pi M$ is a submodule of $H^i_{IS}(S)$ and the later module is $\mathbb{Z}^n$-graded Eulerian, so is  $M/\pi M$.
  
The  commutative diagram
\begin{equation*}
\begin{tikzcd}
	0 & L & M & \overline{M} & 0  \\
	0 & L & M & \overline{M} & 0
	\arrow[from=1-1, to=1-2]
	\arrow[ from=1-2, to=1-3]
    \arrow[ from=2-4, to=2-5]
    \arrow[ "{\pi}"', from=1-4, to=2-4]
	\arrow["{\pi}"', from=1-2, to=2-2]
	\arrow[ from=1-3, to=1-4]
    \arrow[ from=1-4, to=1-5]
	\arrow["{\pi}"', from=1-3, to=2-3]
	\arrow[ from=2-1, to=2-2]
	\arrow[ from=2-2, to=2-3]
	\arrow[ from=2-3, to=2-4]
\end{tikzcd}
\end{equation*}
induces an exact sequence of $\D_F(S)$-modules as follows
$$0\rightarrow \frac{L}{\pi L}\rightarrow \frac{M}{\pi M}\rightarrow \frac{\overline{M}}{\pi \overline{M}}\rightarrow 0.$$
This implies $ L/\pi L$ is $\mathbb{Z}^n$-graded Eulerian as $\D_F(S)$-module since it is a submodule of $M/\pi M$. 

Note that $\pi$ is in the center of $\D_A(R)$. Let $T^g$ be the kernel of the surjective $\D_A(R)$-linear map $L\rightarrow \pi^{g+1}L$ for $g\geq 0$. The commutative diagram
\begin{equation*}
\begin{tikzcd}
	0 & T^g & L & \pi^g L & 0  \\
	0 & T^g & L & \pi^{g+1}L & 0
	\arrow[from=1-1, to=1-2]
	\arrow[ from=1-2, to=1-3]
    \arrow[ from=2-4, to=2-5]
    \arrow[ "{\pi}"', from=1-4, to=2-4]
	\arrow["{\pi}"', from=1-2, to=2-2]
	\arrow[ from=1-3, to=1-4]
    \arrow[ from=1-4, to=1-5]
	\arrow["{\pi}"', from=1-3, to=2-3]
	\arrow[ from=2-1, to=2-2]
	\arrow[ from=2-2, to=2-3]
	\arrow[ from=2-3, to=2-4]
\end{tikzcd}
\end{equation*} gives the following exact sequence in the category of of $\D_F(S)$-modules
$$ \frac{T^g}{\pi T^g}\rightarrow \frac{L}{\pi L}\rightarrow \frac{\pi^g L}{\pi^{g+1}L}\rightarrow 0.$$
Since $L/\pi L$ is $\mathbb{Z}^n$-graded Eulerian, so is $\pi^g L/\pi^{g+1} L$ as it is a quotient of $L/\pi L$. In particular, when $g=m$, we get that $\pi^m L/\pi^{m+1} L$ is Eulerian. Again note that $\left(\pi^m L/\pi^{m+1} L\right)_{\underline{u}}=0$ for all $\underline{u}$ in $[-1,1]^n\cap \mathbb{Z}^n$. Hence, $\pi^m L/\pi^{m+1} L=0$ by Corollary \ref{eulerian module 0 in Z^n}. Therefore, $ \pi^{m+1} \left(A/\pi^{\beta_j({\underline{u}})}A\right)=\pi^{m} \left(A/\pi^{\beta_j({\underline{u}})}A\right)$ and hence by Nakayama Lemma, $\pi^{m} \left(A/\pi^{\beta_j({\underline{u}})}A\right)=0$ for all $1\leq j\leq t({\underline{u}})$ where  $\underline{u}\in  \mathbb{Z}^n$. Consequently, $\beta_j({\underline{u}})\leq m$ for all $1\leq j\leq t({\underline{u}})$ where  $\underline{u}\in  \mathbb{Z}^n$.
\end{proof}
Let us recall the setup \ref{second setup} below.
\begin{setup}\label{second setup'}
Let $s=\max\{\beta_j({\underline{u}})\mid 1\leq j\leq t({\underline{u}})\  \text{where} \ \underline{u}\in  \mathbb{Z}^n\}$. By Theorem \ref{proof of proposition c}, $s$ is a finite number and $s\leq m$. Let the hypothesis be as in Setup \ref{main setup of our paper}. Then,
$$
\Gamma_{\pi}(M_{\underline{u}})=E_A\left(\frac{A}{\pi A}\right)^{l({\underline{u}})} \oplus\left(\bigoplus_{j=1}^{s}\left(\frac{A}{\pi^jA}\right)^{\alpha_j(\underline{u})}\right).
$$
\end{setup}
 We should again remark that in the above expression of $\Gamma_{\pi}(M_{\underline{u}})$, it may happen that $\alpha_j(\underline{u})$ is zero for some $1\leq j\leq s$, precisely when the summand $A/\pi^jA$ does not appear in the expression. In the next result, we study the nature of the numbers $\alpha_j(\underline{u})$ appearing in the above expression. More precisely, we prove the following result.
\begin{theorem}\label{proof of theorem D}
    Let $1\leq j\leq s$. Let the hypothesis be as in Setup \ref{second setup'}. Then, the set $\{\alpha_j(\underline{u})\mid \underline{u}\in \mathbb{Z}^n\}$ is constant on $\mathcal{B}(U)$ for each subset $U$ of $\{1, \ldots, n\}$.
\end{theorem}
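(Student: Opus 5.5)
We aim to show that for each $g\geq 0$ the $F$-dimension of $\left(\pi^g L/\pi^{g+1}L\right)_{\underline{u}}$ is constant on each block $\mathcal{B}(U)$, where $L=\Gamma_\pi(M)$ as in the proof of Theorem \ref{proof of proposition c}. The numbers $\alpha_j(\underline{u})$ can then be recovered by differencing.

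Using the decomposition in Setup \ref{second setup'}, multiplication by $\pi$ is surjective on $E_A(A/\pi A)$. Moreover, $\pi^g(A/\pi^jA)/\pi^{g+1}(A/\pi^jA)$ is $F$ if $j>g$ and $0$ otherwise. Hence
$$\dim_F\left(\frac{\pi^gL}{\pi^{g+1}L}\right)_{\underline{u}}=\sum_{j=g+1}^{s}\alpha_j(\underline{u}) =: d_g(\underline{u}).$$
It follows that $\alpha_j(\underline{u})=d_{j-1}(\underline{u})-d_j(\underline{u})$. So it suffices to prove that each $d_g$ is constant on $\mathcal{B}(U)$.

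By Lemma \ref{iso on each block}, it is enough to show that $\left(\pi^gL/\pi^{g+1}L\right)(\underline{-1})$ is straight as an $S$-module. Finite dimensionality of components is clear from the display above. I would argue by induction on $g$, using Proposition \ref{U and V are straight} repeatedly.
\begin{itemize}
\item For $g=0$, the module $L/\pi L(\underline{-1})$ is straight by Corollary \ref{kernel of gamma and bar of M straight}, and $L/\pi L$ is Eulerian, being a $\D_F(S)$-submodule of $M/\pi M\subseteq H^i_{IS}(S)$.
\item For the inductive step, multiplication by $\pi$ is $\D_A(R)$-linear and surjective from $\pi^{g}L/\pi^{g+1}L$ onto $\pi^{g+1}L/\pi^{g+2}L$. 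This gives an exact sequence of $\mathbb{Z}^n$-graded $\D_F(S)$-modules
$$0\rightarrow W_g\rightarrow \frac{\pi^gL}{\pi^{g+1}L}\xrightarrow{\ \pi\ }\frac{\pi^{g+1}L}{\pi^{g+2}L}\rightarrow 0.$$
If the middle term is Eulerian with straight $(\underline{-1})$-shift, Proposition \ref{U and V are straight} makes the right term straight after shifting.
\end{itemize}
Eulerianness of every $\pi^gL/\pi^{g+1}L$ was already observed in the proof of Theorem \ref{proof of proposition c}, since each is a quotient of $L/\pi L$. So the induction goes through and gives straightness of $\left(\pi^gL/\pi^{g+1}L\right)(\underline{-1})$ for all $g$.

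The main obstacle is making sure the maps are genuinely morphisms of $\mathbb{Z}^n$-graded $\D_F(S)$-modules, so that Proposition \ref{U and V are straight} applies. The quotients $\pi^gL/\pi^{g+1}L$ are killed by $\pi$, so they are $\D_A(R)/\pi\D_A(R)$-modules. I would check that $\D_F(S)$ acts on them through the natural map $\D_A(R)\to\D_F(S)$, as in Remark \ref{remark of multiplication map by pi is linear}. This should hold because $R$ is free over $A$ with a basis of monomials, so the divided-power operators $\partial_i^{[r]}$ reduce modulo $\pi$.

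With straightness established, Lemma \ref{iso on each block} gives that each $d_g$ is constant on $\mathcal{B}(U)$. Taking differences then yields that $\alpha_j(\underline{u})$ is constant on $\mathcal{B}(U)$ for every $1\leq j\leq s$.
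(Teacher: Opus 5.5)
Your proposal is correct and is essentially the paper's proof. The paper likewise gets straightness of $\pi^gL/\pi^{g+1}L(\underline{-1})$ from Proposition \ref{U and V are straight}, computes its $\underline{u}$-component as $F^{\alpha_{g+1}(\underline{u})+\cdots+\alpha_s(\underline{u})}$, and recovers the $\alpha_j$ by downward induction, which is the same as your differencing $\alpha_j=d_{j-1}-d_j$. The only cosmetic difference is that the paper applies Proposition \ref{U and V are straight} once, to the surjection $L/\pi L\to\pi^gL/\pi^{g+1}L$ induced by $\pi^g$, rather than inducting along your chain $\pi^gL/\pi^{g+1}L\to\pi^{g+1}L/\pi^{g+2}L$.
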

\begin{proof}
We denote $H^i_I(R)$ by $M$, $\Gamma_{\pi}(H^i_I(R))$ by $L$ and $M/\Gamma_\pi(M)$ by $\overline{M}$. Let $S=F[X_1,\ldots,X_n]$ where $F=A/\pi A$.  From the proof of Theorem \ref{proof of proposition c}, we have the following exact sequence in the category of of $\D_F(S)$-modules
$$ \frac{T^g}{\pi T^g}\rightarrow \frac{L}{\pi L}\rightarrow \frac{\pi^g L}{\pi^{g+1}L}\rightarrow 0$$ for all $g\geq 0$ and $ L/\pi L$ is $\mathbb{Z}^n$-graded Eulerian as $\D_F(S)$-module. By Corollary \ref{kernel of gamma and bar of M straight}, $L/\pi L(\underline{-1})$ is straight. Therefore, by Proposition \ref{U and V are straight}, $\pi^g L/\pi^{g+1}L(\underline{-1})$ is straight for all $g\geq 0$.

We have
    $$L_{\underline{u}}=E_A\left(\frac{A}{\pi A}\right)^{l({\underline{u}})} \oplus\left(\bigoplus_{j=1}^{s}\left(\frac{A}{\pi^jA}\right)^{\alpha_j(\underline{u})}\right).$$
    Note that $\displaystyle E_A(A/\pi A)\xrightarrow{\pi }E_A(A/\pi A)$ is surjective. This implies 
    $$\pi^sL_{\underline{u}}=E_A\left(\frac{A}{\pi A}\right)^{l({\underline{u}})} \oplus\left(\bigoplus_{j=1}^{s}\left(\pi^s\frac{A}{\pi^jA}\right)^{\alpha_j(\underline{u})}\right)=E_A\left(\frac{A}{\pi A}\right)^{l({\underline{u}})}$$
     and 
     \begin{align*}
         \pi^{s-1}L_{\underline{u}}\ &=E_A\left(\frac{A}{\pi A}\right)^{l({\underline{u}})} \oplus\left(\bigoplus_{j=1}^{s}\left(\pi^{s-1}\frac{A}{\pi^jA}\right)^{\alpha_j(\underline{u})}\right)\\&=E_A\left(\frac{A}{\pi A}\right)^{l({\underline{u}})} \oplus\left(\bigoplus_{j=1}^{s}\left(\frac{\pi^{s-1}A+\pi^jA}{\pi^jA}\right)^{\alpha_j(\underline{u})}\right)\\&= E_A\left(\frac{A}{\pi A}\right)^{l({\underline{u}})} \oplus\left(\frac{\pi^{s-1}A}{\pi^{s}A}\right)^{\alpha_s(\underline{u})}.
     \end{align*}
      Therefore,
     $$\frac{\pi^{s-1}L_{\underline{u}}}{\pi^{s}L_{\underline{u}}}\cong\left(\frac{\pi^{s-1}A}{\pi^{s}A}\right)^{\alpha_s(\underline{u})}\cong\left(\frac{A}{\pi A}\right)^{\alpha_s(\underline{u})}.$$ 
    
Since $\displaystyle\pi^{s-1}L/\pi^s L(\underline{-1})$ is straight,  the set $\{\alpha_s(\underline{u})\mid \underline{u}\in \mathbb{Z}^n\}$ is constant on $\mathcal{B}(U)$ for each subset $U$ of $\{1, \ldots, n\}$ by Lemma \ref{iso on each block}.

      Now assume that the sets $\{\alpha_{s-r}(\underline{u})\mid \underline{u}\in \mathbb{Z}^n\}$, $\{\alpha_{s-r+1}(\underline{u})\mid \underline{u}\in \mathbb{Z}^n\}$,\ldots,  $\{\alpha_{s}(\underline{u})\mid \underline{u}\in \mathbb{Z}^n\}$ are constant on $\mathcal{B}(U)$ for some $r\geq 0$. We prove that the set $\{\alpha_{s-r-1}(\underline{u})\mid \underline{u}\in \mathbb{Z}^n\}$ is constant on $\mathcal{B}(U)$.
      Now 
      \begin{align*}
         \pi^{s-r-2}L_{\underline{u}}\ &=E_A\left(\frac{A}{\pi A}\right)^{l({\underline{u}})} \oplus\left(\bigoplus_{j=1}^{s}\left(\pi^{s-r-2}\frac{A}{\pi^jA}\right)^{\alpha_j(\underline{u})}\right)\\&=E_A\left(\frac{A}{\pi A}\right)^{l({\underline{u}})} \oplus\!\left(\bigoplus_{j=1}^{s}\left(\frac{\pi^{s-r-2}A+\pi^j A}{\pi^j A}\right)^{\alpha_j(\underline{u})}\right)\\&= E_A\left(\frac{A}{\pi A}\right)^{l({\underline{u}})}\oplus\left(\frac{\pi^{s-r-2}A}{\pi^{s-r-1}A}\right)^{\alpha_{s-r-1}(\underline{u})}\oplus\ldots\oplus\left(\frac{\pi^{s-r-2}A}{\pi^{s-1}A}\right)^{\alpha_{s-1}(\underline{u})}\oplus\left(\frac{\pi^{s-r-2}A}{\pi^{s}A}\right)^{\alpha_{s}(\underline{u})}
     \end{align*}
     and 
      \begin{align*}
         \pi^{s-r-1}L_{\underline{u}}\ &=E_A\left(\frac{A}{\pi A}\right)^{l({\underline{u}})} \oplus\left(\bigoplus_{j=1}^{s}\left(\pi^{s-r-1}\frac{A}{\pi^jA}\right)^{\alpha_j(\underline{u})}\right)\\&=E_A\left(\frac{A}{\pi A}\right)^{l({\underline{u}})} \oplus\left(\bigoplus_{j=1}^{s}\left(\frac{\pi^{s-r-1}A+\pi^jA}{\pi^jA}\right)^{\alpha_j(\underline{u})}\right)\\&= E_A\left(\frac{A}{\pi A}\right)^{l({\underline{u}})} \oplus\left(\frac{\pi^{s-r-1}A}{\pi^{s-r}A}\right)^{\alpha_{s-r}(\underline{u})}\oplus\ldots\oplus\left(\frac{\pi^{s-r-1}A}{\pi^{s-1}A}\right)^{\alpha_{s-1}(\underline{u})}\oplus\left(\frac{\pi^{s-r-1}A}{\pi^{s}A}\right)^{\alpha_{s}(\underline{u})}.
     \end{align*}
      Therefore,
      \begin{align*}
         \frac{\pi^{s-r-2}L_{\underline{u}}}{\pi^{s-r-1}L_{\underline{u}}}\ & \cong\left(\frac{\pi^{s-r-2}A}{\pi^{s-r-1}A}\right)^{\alpha_{s-r-1}(\underline{u})}\oplus \left(\frac{\pi^{s-r-2}A}{\pi^{s-r-1}A}\right)^{\alpha_{s-r}(\underline{u})}\oplus\ldots\oplus \left(\frac{\pi^{s-r-2}A}{\pi^{s-r-1}A}\right)^{\alpha_{s}(\underline{u})} \\&\cong\left(\frac{A}{\pi A}\right)^{\alpha_{s-r-1}(\underline{u})+\alpha_{s-r}(\underline{u})+\ldots+\alpha_{s}(\underline{u})}.
      \end{align*}
       Since  $\displaystyle\frac{\pi^{s-r-2}L}{\pi^{s-r-1}L}(\underline{-1})$ is straight, by Lemma \ref{iso on each block}, the set $\{\alpha_{s-r-1}(\underline{u})+\alpha_{s-r}(\underline{u})+\ldots+\alpha_{s}(\underline{u})\mid \underline{u}\in \mathbb{Z}^n\}$ is constant on $\mathcal{B}(U)$ for each subset $U$ of $\{1, \ldots, n\}$. This implies that the set $\{\alpha_{s-r-1}(\underline{u})\mid \underline{u}\in \mathbb{Z}^n\}$ is constant on $\mathcal{B}(U)$ for each subset $U$ of $\{1, \ldots, n\}$ as the sets $\{\alpha_{s-r}(\underline{u})\mid \underline{u}\in \mathbb{Z}^n\}$, $\{\alpha_{s-r+1}(\underline{u})\mid \underline{u}\in \mathbb{Z}^n\}$,\ldots,  $\{\alpha_{s}(\underline{u})\mid \underline{u}\in \mathbb{Z}^n\}$ are constant on $\mathcal{B}(U)$ for each subset $U$ of $\{1, \ldots, n\}$. This completes the proof of the theorem.
\end{proof}
\section{Proof of Theorem E: An application}
In this section, we give a proof of \hyperref[Theorem E]{Theorem E} which is the main and final result of the paper. Before proceeding further, let us again recall the hypothesis as below.
\begin{hypothesis}\label{hypo-TS-7}
Let $A$ be a Dedekind domain of characteristic zero such that for each height one prime ideal $\p$ in $A$, the local ring $A_{\p}$ has mixed characteristic with finite residue field. Suppose that $R=A[X_1,\ldots,X_n]$ is a standard $\mathbb{N}^n$-graded polynomial ring over $A$, i.e., $\deg A=\underline{0}\in \mathbb{N}^n$ and $\deg(X_j)=e_j\in \mathbb{N}^n$. Let $I$ be a $\mathfrak{C}$-monomial ideal of $R$ and let $M:= H^i_I(R)=\bigoplus_{\underline{u}\in \mathbb{Z}^n}M_{\underline{u}}$. 
\end{hypothesis}
\begin{theorem}\label{Bass numbers are constant}
    Assume the hypothesis as in  \ref{hypo-TS-7}.  Fix a  prime ideal $\p\subseteq A$ and  $i\geq 0$. Then, the set of Bass numbers $\{\mu_i(\p,M_{\underline{u}})\mid \underline{u}\in \mathbb{Z}^n\}$ is constant 
on $\mathcal{B}(U)$ for each subset $U$ of $\{1, \ldots, n\}$.
\end{theorem}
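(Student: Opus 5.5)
Since Bass numbers are local, the plan is to reduce to the case of a complete DVR and then read $\mu_i(\p,M_{\underline{u}})$ off the explicit decomposition in Setup \ref{second setup}. Two ingredients are standard. First, $\mu_i(\p,N)=\mu_i(\p A_\p,N_\p)$. Second, for a local ring $(B,\n)$ with completion $\widehat B$, we have $\mu_i(\n,N)=\dim_{k}\Ext^i_B(k,N)$, and this is unchanged under $-\otimes_B\widehat B$ by flatness, since $\Ext^i_B(k,N)\otimes_B\widehat B\cong\Ext^i_{\widehat B}(k,N\otimes_B\widehat B)$ and $\Ext^i_B(k,N)$ is a $k$-vector space. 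Localization and completion are flat and commute with local cohomology and with the grading. Hence $M_{\underline{u}}\otimes_A\widehat{A_\p}=H^i_{IS}(S)_{\underline{u}}$ with $S=\widehat{A_\p}[X_1,\ldots,X_n]$, and $I S$ is again $\mathfrak{C}$-monomial.

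\emph{Case $\p=0$.} Here $\mu_i(0,M_{\underline{u}})=\dim_K\Ext^i_K(K,M_{\underline{u}}\otimes_AK)$. This is $\alpha(\underline{u})$ for $i=0$ and $0$ for $i>0$, where $\alpha(\underline{u})$ is as in Theorem \ref{structure theorem due to TS}(4). As in the proof of Theorem \ref{proof of theorem a}, $M\otimes_AK=H^i_I(K[X_1,\ldots,X_n])$, and this module shifted by $\underline{-1}$ is straight by Theorem \ref{theorem of terai and mustata}. Lemma \ref{iso on each block} then gives that $\alpha(\underline{u})$ is constant on each $\mathcal{B}(U)$.

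\emph{Case $\p\neq0$.} Put $T=\widehat{A_\p}$. This is a complete DVR of mixed characteristic with finite residue field, so Setup \ref{second setup} applies to $N=M\otimes_AT$, and $\mu_i(\p,M_{\underline{u}})=\mu_i(\pi T,N_{\underline{u}})$. By Setup \ref{second setup} and Setup \ref{main setup of our paper},
$$
N_{\underline{u}}=E_T(F)^{l(\underline{u})}\oplus\bigoplus_{j=1}^{s}(T/\pi^jT)^{\alpha_j(\underline{u})}\oplus T^{a(\underline{u})}\oplus K_{\p}^{b(\underline{u})}.
$$
Here I will use that the torsion part splits off. This follows because the torsion is $\Gamma_\pi$, which is a direct sum of an injective module and a module of bounded exponent (Theorem \ref{proof of proposition c}), and a bounded pure submodule is a summand. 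Alternatively, one can avoid splitting by a long exact sequence argument in which each term has constant invariants; I prefer the splitting.

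Bass numbers are additive over finite direct sums, so it remains to compute them for each summand over the DVR $T$. The modules $E_T(F)$ and $K_\p$ are injective, with $\mu_0(\pi T,E_T(F))=1$, $\mu_0(\pi T,K_\p)=0$, and vanishing higher Bass numbers. For $T$, the minimal injective resolution $0\to T\to K_\p\to E_T(F)\to0$ gives $\mu_1=1$ and $\mu_0=0$. For $T/\pi^jT$, we have $\mu_0=1$ and $\mu_i=0$ for $i\ge1$, since $T/\pi^jT\subset E_T(F)$ with quotient $E_T(F)$. Therefore
$$
\mu_0(\p,M_{\underline{u}})=l(\underline{u})+\sum_j\alpha_j(\underline{u})=l(\underline{u})+t(\underline{u}),\qquad
\mu_1(\p,M_{\underline{u}})=a(\underline{u}),
$$
and $\mu_i=0$ for $i\ge2$. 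Theorems \ref{proof of theorem a} and \ref{proof of theorem b} then give constancy on each block $\mathcal{B}(U)$.

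The main obstacle I expect is the base-change step. We must check that the invariants $l,t,a$ attached to $N=M\otimes_A\widehat{A_\p}$ are exactly those that Theorems \ref{proof of theorem a} and \ref{proof of theorem b} control. This holds because those theorems are stated for an arbitrary $\mathfrak{C}$-monomial ideal over a complete DVR, and $IS$ is such an ideal. The direct-sum splitting of $N_{\underline{u}}$ should also be verified; if needed, it can be replaced by the long exact $\Ext(F,-)$ sequence for $0\to\Gamma_\pi\to N_{\underline{u}}\to\overline{N_{\underline{u}}}\to0$.
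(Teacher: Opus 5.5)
Your overall strategy matches the paper's: reduce to $T=\widehat{A_\p}$, split off $\Gamma_\pi(N_{\underline{u}})$, compute Bass numbers summand by summand, then quote the block-constancy theorems. However, one of your summand computations is wrong: $\mu_1(\pi T,T/\pi^jT)=1$, not $0$. Your own observation already shows this. $0\to T/\pi^jT\to E_T(F)\to E_T(F)\to 0$ is a minimal injective resolution, and its nonzero term $E^1=E_T(F)$ contributes exactly one to $\mu_1$. Equivalently, applying $\Hom_T(-,T/\pi^jT)$ to $0\to T\xrightarrow{\pi}T\to F\to 0$ gives $\Ext^1_T(F,T/\pi^jT)\cong (T/\pi^jT)/\pi(T/\pi^jT)\cong F$. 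So the correct formula is $\mu_1(\p,M_{\underline{u}})=a(\underline{u})+t(\underline{u})$, which is the paper's $a(\underline{u})+\sum_j\alpha_j(\underline{u})$, and not $a(\underline{u})$. The theorem still follows, because $t$ is constant on each $\mathcal{B}(U)$ by Theorem \ref{proof of theorem b}. You must, however, correct the displayed value of $\mu_1$ and invoke Theorem \ref{proof of theorem b} for $\mu_1$ as well as for $\mu_0$.

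Apart from this, your route is correct and in two respects cleaner than the paper's. First, by writing $\sum_j\alpha_j(\underline{u})=t(\underline{u})$ you need only Theorems \ref{proof of theorem a} and \ref{proof of theorem b}. The paper appeals to Theorem \ref{proof of theorem D}, but only through this sum, so Theorems \ref{proof of proposition c} and \ref{proof of theorem D} are not actually needed for the Bass-number statement. Second, you treat $\p=0$ separately, via $\mu_0(0,M_{\underline{u}})=\dim_K(M_{\underline{u}}\otimes_A K)$ and Theorem \ref{theorem of terai and mustata} over $K$. The paper's reduction ``we may assume $A$ is a complete DVR'' passes over this case, since $\widehat{A_{(0)}}=K$ is a field. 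The remaining differences are cosmetic. The paper computes $\Ext^i_A(A/\pi A,-)$ from the free resolution of $A/\pi A$ rather than using minimal injective resolutions. It also quotes the torsion splitting from the earlier structure theorem instead of deriving it. Your purity argument for the splitting is fine: the torsion submodule is pure because the quotient $T^{a(\underline{u})}\oplus K_\p^{b(\underline{u})}$ is flat, the injective part splits off, and the bounded remainder, being pure, is a direct summand.
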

\begin{proof}
      Note that $$\mu_i(\p,M_{\underline{u}})=\mu_i(\widehat{\p A_\p},M_{\underline{u}}\otimes_A \widehat{A_\p}).$$

    Therefore we may assume that $(A,\pi A,F)$ is a complete DVR of mixed characteristic with finite residue field and it is enough to prove that for a fixed $i\geq 0$, the set $\{\mu_i(\pi A,M_{\underline{u}})\mid \underline{u}\in \mathbb{Z}^n\}$ is constant on $\mathcal{B}(U)$ for each subset $U$ of $\mathcal{S}=\{1,2\ldots,n\}$. Let $K$ be the fraction field of $A$. Since $A$ is PID, by \cite[Theorem 1.3, Part 7(a)]{TS-25} the following exact sequence splits
    $$0\rightarrow \Gamma_\pi(M_{\underline{u}})\rightarrow M_{\underline{u}}\rightarrow \overline{M_{\underline{u}}} \rightarrow 0.$$
    Hence $$M_{\underline{u}}=\Gamma_\pi(M_{\underline{u}})\oplus \overline{M_{\underline{u}}}.$$
    There are finite numbers $s, l({\underline{u}}), \alpha_j({\underline{u}})\geq 0$ (see, Setup \ref{second setup}) such that
$$\Gamma_{\pi}(M_{\underline{u}})=E_A\left(\frac{A}{\pi A}\right)^{l({\underline{u}})} \oplus\left(\bigoplus_{j=1}^{s}\left(\frac{A}{\pi^jA}\right)^{\alpha_j(\underline{u})}\right)$$ and $$\overline{M_{\underline{u}}}= \frac{M_{\underline{u}}}{\Gamma_{\pi}(M_{\underline{u}})}=A^{a(\underline{u})}\oplus K^{b(\underline{u})}.$$
Now \begin{align*}
\mu_i(\pi A,M_{\underline{u}})\ &=\mu_i\left(\pi A,\Gamma_\pi(M_{\underline{u}})\oplus \overline{M_{\underline{u}}}\right)\\ &=\dim_{A/\pi A}\Ext^i_A(A/\pi A, \Gamma_\pi(M_{\underline{u}})\oplus \overline{M_{\underline{u}}})\\ &= \dim_{A/\pi A}\Ext^i_A\left(A/\pi A, \Gamma_\pi(M_{\underline{u}})\right)+ \dim_{A/\pi A}\Ext^i_A(A/\pi A,\overline{M_{\underline{u}}})
\end{align*}
Since $A$ is a domain, the following  sequence 
\begin{equation}\label{equation for free resolution of A/pi}
0\rightarrow A\xrightarrow{\pi}A\rightarrow A/\pi A\rightarrow 0
\end{equation}gives a free resolution of $A/\pi A$. Therefore, both $\Ext^i_A\left(A/\pi A, \Gamma_\pi(M_{\underline{u}})\right)$ and $\Ext^i_A(A/\pi A,\overline{M_{\underline{u}}})$ are zero for $i\geq 2$. This implies that $\mu_i(\pi A,M_{\underline{u}})=0$ for $i\geq 2$.

Let $i=1$. Now $\Ext^1_A\left(A/\pi A, E(A/\pi  A)\right)=0$ by \cite[Proposition 3.2.12, Part (a)]{BH-93}.
Applying $\Hom(-, A/\pi^j A)$ to (\ref{equation for free resolution of A/pi}), for all $j\geq 1$ we get  $$\Ext^1_A\left(A/\pi A, A/\pi^j A\right)= A/\pi^j A/\pi  (A/\pi^jA)=A/\pi A.$$
Therefore,
\begin{align*}
   \Ext^1_A\left(A/\pi A, \Gamma_\pi(M_{\underline{u}})\right)\ &=\Ext^1_A\left(A/\pi A, E_A\left(\frac{A}{\pi A}\right)^{l({\underline{u}})} \oplus\left(\bigoplus_{j=1}^{s}\left(\frac{A}{\pi^jA}\right)^{\alpha_j(\underline{u})}\right)\right)\\ &= \Ext^1_A\left(A/\pi A, E_A\left(\frac{A}{\pi A}\right)^{l({\underline{u}})}\right) \oplus\Ext_A^1\left(A/\pi A,\bigoplus_{j=1}^{s}\left(\frac{A}{\pi^jA}\right)^{\alpha_j(\underline{u})}\right)\\&=\bigoplus_{j=1}^{s}\Ext_A^1\left(A/\pi A,\frac{A}{\pi^jA}\right)^{\alpha_j(\underline{u})}\\&=\bigoplus_{j=1}^{s}(A/\pi A)^{\alpha_j(\underline{u})}\\&=\left(A/\pi A\right)^{\sum_{j=1}^s \alpha_j(\underline{u})}.
\end{align*}
Again applying $\Hom(-, A)$ and $\Hom(-, K)$ respectively  to (\ref{equation for free resolution of A/pi}) we get, $\Ext^1_A\left(A/\pi A,  A\right)=A/\pi A$ and 
$\Ext^1_A\left(A/\pi A, K\right)=K/\pi K=0$ (as $\pi$ is invertible in $K$).
Therefore,
\begin{align*}
   \Ext^1_A\left(A/\pi A, \overline{M_{\underline{u}}}\right)\ &=\Ext^1_A\left(A/\pi A, A^{a(\underline{u})}\oplus K^{b(\underline{u})}\right)\\ &= \Ext^1_A\left(A/\pi A, A^{a(\underline{u})}\right) \oplus\Ext^1_A\left(A/\pi A, K^{b(\underline{u})}\right)\\&=(A/\pi A)^{a(\underline{u})}.
\end{align*}
Hence 
\begin{align*}
    \mu_1(\pi A,M_{\underline{u}})\ &=\dim_{A/\pi A}\Ext^1_A\left(A/\pi A, \Gamma_\pi(M_{\underline{u}})\right)+ \dim_{A/\pi A}\Ext^1_A(A/\pi A,\overline{M_{\underline{u}}})\\ &=\sum_{j=1}^s \alpha_j(\underline{u})+a(\underline{u}).
\end{align*}
Note that for all $1\leq j\leq s $, the set $\{\alpha_j(\underline{u})\mid \underline{u}\in \mathbb{Z}^n\}$ as well as the set $\{a(\underline{u})\mid \underline{u}\in \mathbb{Z}^n\}$ is constant on $\mathcal{B}(U)$ for each subset $U$ of $\{1, \ldots, n\}$ (see, Theorem \ref{proof of theorem D} and Theorem \ref{proof of theorem a}). Consequently, the set $\{\mu_1(\pi A,M_{\underline{u}})\mid \underline{u}\in \mathbb{Z}^n\}$ is constant on $\mathcal{B}(U)$.

Now we show that the set $\{\mu_0(\pi A,M_{\underline{u}})\mid \underline{u}\in \mathbb{Z}^n\}$ is constant on $\mathcal{B}(U)$.
It is easy to see that $$\Hom_A(A/\pi A, E(A/\pi A))\cong \Hom_A(A/\pi A, A/\pi^j A)\cong A/\pi A$$
 and $$\Hom_A(A/\pi A, A)=\Hom_A(A/\pi A, K)=0.$$
 Therefore,
\begin{align*}
   \Hom_A\left(A/\pi A, \Gamma_\pi(M_{\underline{u}})\right)\ &=\Hom_A\left(A/\pi A, E_A\left(\frac{A}{\pi A}\right)^{l({\underline{u}})} \oplus\left(\bigoplus_{j=1}^{s}\left(\frac{A}{\pi^jA}\right)^{\alpha_j(\underline{u})}\right)\right)\\ &= \Hom_A\left(A/\pi A, E_A\left(\frac{A}{\pi A}\right)^{l({\underline{u}})}\right) \oplus\Hom_A\left(A/\pi A,\bigoplus_{j=1}^{s}\left(\frac{A}{\pi^jA}\right)^{\alpha_j(\underline{u})}\right)\\&=(A/\pi A)^{l({\underline{u}})}\oplus\left(\bigoplus_{j=1}^{s}\Hom_A\left(A/\pi A,\frac{A}{\pi^jA}\right)^{\alpha_j(\underline{u})}\right)\\&=(A/\pi A)^{l({\underline{u}})}\oplus\left(\bigoplus_{j=1}^{s}(A/\pi A)^{\alpha_j(\underline{u})}\right)\\&=\left(A/\pi A\right)^{l(\underline{u})+\sum_{j=1}^s \alpha_j(\underline{u})}
\end{align*}
Also \begin{align*}
   \Hom_A\left(A/\pi A, \overline{M_{\underline{u}}}\right)\ &=\Hom_A\left(A/\pi A, A^{a(\underline{u})}\oplus K^{b(\underline{u})}\right)\\ &= \Hom_A\left(A/\pi A, A^{a(\underline{u})}\right) \oplus\Hom_A\left(A/\pi A, K^{b(\underline{u})}\right)\\&=0.
\end{align*}
Hence 
\begin{align*}
    \mu_0(\pi A,M_{\underline{u}})\ &=\dim_{A/\pi A}\Hom_A\left(A/\pi A, \Gamma_\pi(M_{\underline{u}})\right)+ \dim_{A/\pi A}\Hom_A(A/\pi A,\overline{M_{\underline{u}}})\\ &=l(\underline{u})+\sum_{j=1}^s \alpha_j(\underline{u})
\end{align*}

By Theorem \ref{proof of theorem D}, for all $1\leq j\leq s $, the set $\{\alpha_j(\underline{u})\mid \underline{u}\in \mathbb{Z}^n\}$ is constant on $\mathcal{B}(U)$ for each subset $U$ of $\{1, \ldots, n\}$. Also by Theorem \ref{proof of theorem b}, the set $\{l(\underline{u})\mid \underline{u}\in \mathbb{Z}^n\}$ is constant on $\mathcal{B}(U)$ so is the set $\{\mu_0(\pi A,M_{\underline{u}})\mid \underline{u}\in \mathbb{Z}^n\}$. This concludes the proof of the theorem.
\end{proof}

\bigskip

\noindent {\it Acknowledgements:} The first  author thanks the Government of India for support through the Prime Minister's Research Fellowship (PMRF ID: 1303161).

\medskip

\bibliographystyle{plain}

\end{document}